\pgfplotsset{compat=1.15}
\DeclareMathOperator{\Conf}{Conf}
\DeclareMathOperator{\id}{id}
\DeclareMathOperator{\Sq}{Sq}
\DeclareMathOperator{\Fl}{Fl}
\DeclareMathOperator{\FN}{FN}
\DeclareMathOperator{\Orb}{Orb}
\DeclareMathOperator{\Dec}{Dec}
\DeclareMathOperator{\Tree}{Tree}
\newtheorem{theorem}{Theorem}[section]
\newtheorem{proposition}[theorem]{Proposition}
\newtheorem{corollary}[theorem]{Corollary}
\newtheorem{lemma}[theorem]{Lemma}
\newtheorem*{theorem*}{Theorem}
\newtheorem*{Atiyah-weak}{Weak Atiyah--Sutcliffe Conjecture}
\newtheorem*{Atiyah-strong}{Strong Atiyah--Sutcliffe Conjecture}
\newtheorem*{Atiyah-E3}{$ E_3 $ Atiyah--Sutcliffe Conjecture}
\newtheorem*{Atiyah-weak-real}{Weak real Atiyah--Sutcliffe Conjecture}
\newtheorem*{Atiyah-strong-real}{Strong real Atiyah--Sutcliffe Conjecture}
\newtheorem*{Atiyah-E2-real}{$ E_2 $ real Atiyah--Sutcliffe Conjecture}
\theoremstyle{definition}
\newtheorem{definition}[theorem]{Definition}
\theoremstyle{remark}
\newtheorem{remark}[theorem]{Remark}
\newcommand{\dip}[1]{^{ [{#1}] }}
\newcommand{\CP}{\mathbb{CP}}
\newcommand{\RP}{\mathbb{RP}}
\newcommand{\FM}{FM}
\begin{document}

\author[L. Guerra]{Lorenzo Guerra }
\address{Universit\`a degli studi di Milano-Bicocca}
\email{lorenzo.guerra@unimib.it}
\author[P. Salvatore]{Paolo Salvatore}
\address{Universit\`a di Roma Tor Vergata}
\email{salvator@mat.uniroma2.it}

\begin{abstract}
We show that a certain conjecture by Atiyah and Sutcliffe implies the existence of an $E_3 $-algebra (respectively $E_2$-algebra ) structure on the disjoint union of all complex (respectively real) full flag manifolds modulo symmetric groups. Moreover, we show that these structures are liftings of exotic $ E_3 $ (respectively $E_2$) structures on the free $ E_\infty $-algebras on $BU(1)_+$ (respectively $BO(1)_+$), that do not extend to $ E_4 $ (respectively $ E_3 $) structures. 
We also provide some (co)homological calculations supporting the conjecture.
%We also provide some (co)homological calculations showing that those structures are homotopically distinct from the standard ones.
\end{abstract}
\thanks{The authors acknowledge the 
	MUR Excellence Department Project awarded to the Department of Mathematics, University of Rome Tor Vergata, CUP E83C18000100006. 
} 

\title{The Atiyah--Sutcliffe conjecture and $ E_n $ algebras}
\maketitle

\section{Introduction}

In \cite{Atiyah}, motivated by a problem in mathematical physics stated by Berry and Robbins \cite{Berry-Robbins}, Atiyah studied the existence of a continuous map from the configuration 
space $\Conf_n(\mathbb{R}^3)$  of $n$ distinct points in $\mathbb{R}^3$ to the space of $n$-tuples of linearly independent complex lines in $ \mathbb{C}^n $ that is equivariant with respect to the action of the symmetric group $ \Sigma_n $. By orthogonalization we can identify the range up to equivariant homotopy equivalence to the full (or complete) flag manifold $\Fl_n(\mathbb{C})$.

 %$ \Conf_n(\mathbb{R}^3) \to \Fl_n(\mathbb{C}) $.

%As any set of vectors in $ \mathbb{C}^n $ can be symmetrically orthogonalized, defining such a map is equivalent to providing a continuous choice of an $ n $-tuple of linearly independent complex lines in $ \mathbb{C}^n $ from a configuration in $ \mathbb{R}^3 $.
Starting from $ (p_1,\dots,p_n) \in \Conf_n(\mathbb{R}^3) $, Atiyah 
constructs the vectors
\[
x_{i,j} = \frac{p_j - p_i}{||p_j - p_i||} \in S^2 \subseteq \mathbb{R}^3.
\]
By the stereographic projection, $ S^2 $ is homeomorphic to $ \mathbb{CP}^1 $. The element $ \prod_{j\not=i} x_{i,j} $ then belongs to the symmetric product $ SP^{n-1}(\mathbb{CP}^1) $, which is homeomorphic to $ \mathbb{CP}^{n-1} $ via the space of polynomials of degree at most $ n-1 $ and the Fundamental Theorem of Algebra. In particular, it provides a complex line $ \ell_i = \ell_i(p_1,\dots,p_n) $ in $ \mathbb{C}^n $.

Atiyah proposed the candidate map arising from these $ n $ complex lines, but its well-definition depends on the validity of the following conjecture.
\begin{Atiyah-weak}
The lines $ \ell_1(p_1,\dots,p_n), \dots, $  $ \ell_n(p_1,\dots,p_n) $ are projectively independent for all $ (p_1,\dots,p_n) \in \Conf_n(\mathbb{R}^3) $.
\end{Atiyah-weak}

Under this assumption, there is a map $ A_n \colon \Conf_n(\mathbb{R}^3) \to \Fl_n(\mathbb{C}) $. We recall that $ A_n $ is automatically $ SO(3) $-equivariant, in addition to being $ \Sigma_n $-equivariant. Maps with similar properties have been constructed by different techniques \cite{Atiyah,Atiyah-Bielawski}. However, as we will show in this paper, Atiyah's conjectural map is also compatible with the composition of the Fulton-MacPherson operad.

In \cite{Atiyah-Sutcliffe}, Atiyah and Sutcliffe stated a stronger version of this conjecture. From the isomorphism $ S^2 \cong \CP^1 $, they can identify $ x_{i,j}(p_1,\dots,p_n) $ with a complex line $ \ell_{i,j} $ inside $ \mathbb{C}^2 $. They let $ u_{i,j} $ be a representative of this line and let $ v_i = \prod_{j \not= i} u_{i,j} \in SP^{n-1}(\mathbb{C}^2) \cong \mathbb{C}^n $.

Let $ e_1,\dots,e_n $ be the canonical basis vectors of $ \mathbb{C}^n $. Then, in the exterior power $ \Lambda^n(\mathbb{C}) $,
\[
v_1 \wedge \dots \wedge v_n = \varphi e_1 \wedge \dots \wedge e_n
\]
for some $ \varphi \in \mathbb{C} $ depending on the values $ u_{i,j} $. Explicitly, $ \varphi $ is the determinant of the matrix $ [v_1,\dots,v_n] $ whose $ i^{th} $ column is the expansion of $ v_i $ in the canonical basis. If we fix representatives $ u_{i,j} $ once and for all, then $ \varphi = \varphi(p_1,\dots,p_n) $ becomes a function of $ (p_1,\dots,p_n) $. For $ I = \{i_1,\dots,i_r\} \subseteq \{1,\dots,n\}$ a subset they let $ \varphi_I = \varphi(p_{i_1},\dots,p_{i_r}) $. They then define the normalized determinant function
\[
D \colon \Conf_n(\mathbb{R}^3) \to \mathbb{C}, \quad D(p_1,\dots,p_n) = \frac{\varphi(p_1,\dots,p_n)}{\prod_{I\subseteq \{1,\dots,n\}, |I| = 2} \varphi_I(p_1,\dots,p_n)}.
\]
$ D $ is a well-defined continuous function of $ (p_1,\dots,p_n) $ because $ \varphi $ and $ \prod_I \varphi_I $ are homogeneous of the same degree in the representatives $ u_{i,j} $, hence the ratio does not depend on their choice.

\begin{Atiyah-strong}
For all $ (p_1,\dots,p_n) $,
\[
|D(p_1,\dots,p_n)| \geq 1.
\]
\end{Atiyah-strong}

The strong Atiyah-Sutcliffe conjecture clearly implies the weak one, which is equivalent to $ D(p_1,\dots,p_n) \not= 0 $.

There is numerical evidence that this conjecture is true but, until today, it has been proved only for $ n \leq 4 $ and for configurations of some special type. \cite{Dokovic1,Dokovic2,Malkoun}.

One can formulate a ``real''  version of these conjectures. Starting from $ (p_1,\dots, p_n) \in \Conf_n(\mathbb{R}^2) $, we construct vectors $ x_{i,j} \in S^1 \subseteq \mathbb{R}^2 $ by normalization, as above. By the stereographic projection, $ S^1 $ is isomorphic to $ \mathbb{RP}^1 $. The element $ \prod_{j \not= i} x_{i,j} $ belongs to the symmetric product $ SP^{n-1}(\mathbb{RP}^1) $, which embeds in $ \mathbb{RP}^{n-1} $ via the space of polynomials of degree at most $ n-1 $. In particular, it provides lines $ l_i(p_1,\dots, p_n) $ in $ \mathbb{R}^n $. We remark that this embedding is not a homeomorphism (instead $ \mathbb{RP}^{n-1} $ is homeomorphic to a truncated product, see \cite{Kallel-Salvatore}), and its image is identified with the projectivization of the subspace of polynomials with real roots.
	
One can still define the functions $ \varphi $, $ \varphi_I $, and the normalized determinant $ D $ as before, and state the following conjectures:
\begin{Atiyah-weak-real}
The lines $ l_1(p_1,\dots,p_n), \dots, $ $ l_n(p_1,\dots,p_n) $ are projectively independent for all $ (p_1,\dots,p_n) \in \Conf_n(\mathbb{R}^2) $.
\end{Atiyah-weak-real}
\begin{Atiyah-strong-real}
For all $ (p_1,\dots,p_n) \in \Conf_n(\mathbb{R}^2) $, $ |D(p_1,\dots,p_n)| \geq 1 $.
\end{Atiyah-strong-real}
If the weak real Atiyah--Sutcliffe conjecture is true, then there is a map $ A_n \colon \Conf_n(\mathbb{R}^2) \to \Fl_n(\mathbb{R}) $ to the space of complete unordered flags in $ \mathbb{R}^n $ that is $ SO(2) $- and $ \Sigma_n $-equivariant.

The strong real Atiyah--Sutcliffe conjecture implies the weak one. Moreover, the ``complex'' counterpart of each of these conjectures imply the ``real'' version.

The main goal of this paper is to show that, if the strong version of the Atiyah--Sutcliffe conjectures are true, then in the complex case the maps $ A_n \colon \Conf_n(\mathbb{R}^3) \to \Fl_n(\mathbb{C}) $ extend to an $ E_3 $-structure on the disjoint union of complex unordered complete flag manifolds $ \bigsqcup_{n = 0}^{\infty} \frac{\Fl_n(\mathbb{C})}{\Sigma_n} $ (and in the real case $ A_n \colon \Conf_n(\mathbb{R}^2) \to \Fl_n(\mathbb{R}) $ extend to an $ E_2 $-structure on $ \bigsqcup_{n=0}^{\infty} \frac{\Fl_n(\mathbb{R})}{\Sigma_n} $).

As models for the $ E_m $-operad, we use the Fulton--MacPherson operad $ \FM_m = \{ \FM_m(n) \}_n $, because its arity-$ n $ space $ \FM_m(n) $ is the most closely related to the configuration space $ \Conf_n(\mathbb{R}^m) $ (in fact it is a compactification that contains the configuration space modulo dilations and translations as a dense open subset).
As a preliminary step, that does not require assuming the Atiyah--Sutcliffe conjecture, we show that a suitably weighted version of Atiyah's maps $ A_n \colon \Conf_n(\mathbb{R}^3) \to (\mathbb{CP}^{n-1})^n \subseteq (\mathbb{CP}^\infty)^n = BU(1)^n $ extends, necessarily in a unique way, to a map $ \FM_3(n) \to E(\Sigma_n) \times BU(1)^n $, where $ E(\Sigma_n) $ is a total space of the universal bundle for the symmetric group $ \Sigma_n $ and $ BU(1) $ is the classifying space of the circle $ U(1) $. These maps are compatible with the operad composition in $ \FM_3 $ and endow $ \bigsqcup_n E(\Sigma_n) \times_{\Sigma_n} BU(1)^n $ with an ``exotic'' $ E_3 $-action, distinct from the restriction of the classical $ E_\infty $-action.
The spaces $ E(\Sigma_n) \times_{\Sigma_n} X^n $ are also denoted $ D_n(X) $ in the literature and called extended symmetric powers. $ D_n(X) $ can be interpreted as the unordered configuration space of points in $ \mathbb{R}^\infty $ with labels in $ X $. A similar construction can be done in the real case.

Assuming the strong Atiyah--Sutcliffe conjecture, the $ n $ labels in $ BU(1) = \mathbb{CP}^\infty $ are always projectively independent for points of the image of the map $ \FM_3(n) \to E(\Sigma_n) \times BU(1)^n $. This ensures that the $ E_3 $-structure lifts to the disjoint union of the homotopy fibers of the maps $ D_n(BU(1)) \to BU(n) $, which are homotopy equivalent to $ \Fl_n(\mathbb{C}) / \Sigma_n $. Similarly, there is a lifted $ E_2 $-structure on $ \Fl_n(\mathbb{R}) / \Sigma_n $.

If there is an $ E_3 $-structure as above, let us consider the underlying $ E_1 $ and $ E_2 $-structures. The $ E_1 $-structure is determined by the homotopy-associative and homotopy-commutative product $ \frac{\Fl_n(\mathbb{C})}{\Sigma_n} \times \frac{\Fl_m(\mathbb{C})}{\Sigma_m} \to \frac{\Fl_{n+m}(\mathbb{C})}{\Sigma_{n+m}} $ given by direct sum of flags.
In contrast, the restricted $ E_2 $-structure is more elusive: while the underlying $ E_2 $-structure of the exotic $ E_3 $-structure on $ \bigsqcup_n D_n(BU(1)) $ is equivalent to the classical one, 
there are potentially uncountable many distinct $ E_2 $-structures on $ \bigsqcup_n \frac{\Fl_n(\mathbb{C})}{\Sigma_n} $, since there are uncountable many homotopy classes of $ E_2 $ maps from a cofibrant replacement of $ \mathbb{N} $ to $ \bigsqcup_n BU(n) $.
The proof of this fact uses the technical, but potentially interesting on its own, Lemma~\ref{lem:cellular replacement}, stating that $ \bigsqcup SP^n(D^2) $ is a cellular cofibrant replacement of the $ \FM_2 $-algebra $ \mathbb{N} $. This is shown by means of a certain functor from topological operads to spaces. When applied to $ \FM_2 $, it yields the disjoint union of the symmetric products of the disk, and when applied to $ W\FM_2 $, which is isomorphic to $ \FM_2 $, provides a cellular $ \FM_2 $-algebra structure on that space.
We do not know which of the lifted $ E_2 $-structures is given by the action produced from the Atiyah map.

We also perform some calculation of the Dyer--Lashof operations associated with our $ E_3 $- and $ E_2 $-structures. This is possible because Jana and the first author have fully determined the (co)homology of real and complex complete unordered flag manifolds with field coefficients in \cite{Guerra-Santanil}.
Since Atiyah's map is conjectural, in high dimensions its geometry is unclear, and it is very difficult to do calculations by pairing with geometric cohomology generators. Nevertheless, we managed to use a class based on $ 4 $ points to show that the Dyer--Lashof operations associated with the Atiyah $ E_3 $-structure differ from those associated with the classical one even after passing to group completions.
We also combine low-dimensional calculations with formal algebraic machinery to argue that we do not see homological obstructions to the existence of such $ E_3 $-structure with mod $ 2 $ coefficients.

Finally, we tackle the problem of the extendibility of these $ E_k $-structure. We show, using homotopical methods, that the complex $ E_3 $-structure arising from Atiyah's map does not extend to an $ E_4 $-structure, and that the real $ E_2 $-structure does not extend to an $ E_3 $-structure. The extendibility of these structures on the group completions is currently an open problem. In the last section of this paper we show that there are no obstructions to such extensions in mod $ 2 $ homology.

In addition to this introduction, the paper is structured as follows. In Section 2 we construct the conjectural $ E_3 $-structure ($ E_2 $-structure in the real case), state our homotopical versions of the Atiyah--Sutcliffe conjecture and investigate the logical relations between them and the weak/strong versions of the conjecture. Section 3 is devoted to lifting these actions to complete unordered flag manifolds and the analysis of such lifts. Section 4 contains our Dyer--Lashof calculations. In Section 5 we prove our non-extendibility theorem, and in Section 6 we discuss group completions and their homology.

\section*{Acknowledgements}

The authors would like to thank Andrea Bianchi for discussing the topic of this paper and for proposing the core idea of Theorem~\ref{thm:extendibility E2}.

\section{The $ E_3 $- and $ E_2 $-actions}

Let $ D_n(BU(1)) = E(\Sigma_n) \times_{\Sigma_n} BU(1)^n $ be the $ n $-fold extended symmetric power of $ BU(1) $. If we model $ E(\Sigma_n) $ as $ \Conf_n(\mathbb{R}^\infty) $, this becomes the unordered configuration space of $ n $ points in $ \mathbb{R}^\infty $ with labels in $ BU(1) $. In the following we will also use the Fulton--MacPherson compactification $ \FM_\infty(n) $ of $ \Conf_n(\mathbb{R}^\infty) $, introduced in \cite{Fulton-MacPherson}, as a model for $ E(\Sigma_n) $. We use the $ \infty $ index to stress that we are working with configurations in $ \mathbb{R}^\infty $.

We can extend the definition of Atiyah's map to construct an $ E_3 $-action on the space $ D(BU(1)) = \bigsqcup_n D_n(BU(1)) $.
With this goal in mind, we introduce a generalization of Atiyah's maps, that intuitively corresponds to points in a configuration having weights or multiplicities.
\begin{definition} \label{def:weighted Atiyah map}
Let $ w = (w_1,\dots,w_n) \in \mathbb{N}^n $. The weighted Atiyah map is the function
\[
A_n^w \colon \Conf_n(\mathbb{R}^3) \to BU(1)^n, \quad A_n = (\prod_{j\not=1} x_{1,j}^{w_j}, \dots, \prod_{j \not= n} x_{n,j}^{w_j}) \in SP(\CP^1)^n = BU(1)^n,
\]
where products are understood in $ SP(\CP^1) $.
\end{definition}

We first note that the ``direction'' functions $ x_{i,j} \colon \Conf_n(\mathbb{R}^3) \to S^2 $ defined in the previous section extend to the Fulton-Mac Pherson compactification $ \FM_3(n) $ of $ \Conf_n(\mathbb{R}^3) $. This is immediate from Sinha's description of $ \FM_3(n) $ \cite{Sinha:04}.
This remark implies the following statement.
\begin{proposition} \label{prop:extension}
Let $ w \in \mathbb{N}^n $. The weighted Atiyah map extends uniquely to a continuous function $ FM^3_n \to BU(1)^n $
\end{proposition}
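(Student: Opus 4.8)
The plan is to deduce the statement from the fact, noted just above, that the direction functions $x_{i,j}$ extend to $\FM_3(n)$, together with elementary density and continuity considerations. \emph{Uniqueness} is immediate: $\Conf_n(\mathbb{R}^3)$ is a dense (open) subset of $\FM_3(n)$ and the target $BU(1)^n$ is Hausdorff, so any continuous map on $\FM_3(n)$ is determined by its restriction to $\Conf_n(\mathbb{R}^3)$; hence two extensions of $A_n^w$ must coincide. Only existence needs an argument.

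For existence, I would exhibit $A_n^w$ as a composite in which only the first arrow depends on the configuration. Identifying $S^2 \cong \CP^1$ and setting $W_i = \sum_{j \neq i} w_j$, the $i$-th component of $A_n^w$ factors as
\[
\Conf_n(\mathbb{R}^3) \xrightarrow{\ (x_{i,j})_{j\neq i}\ } (\CP^1)^{n-1} \xrightarrow{\ \mu_i^w\ } SP^{W_i}(\CP^1) \hookrightarrow SP(\CP^1) = BU(1),
\]
where $\mu_i^w$ sends $(\ell_j)_{j \neq i}$ to the unordered tuple in which each $\ell_j$ occurs with multiplicity $w_j$; equivalently, $\mu_i^w$ is the iterated diagonal $(\CP^1)^{n-1} \to (\CP^1)^{W_i}$ repeating the $j$-th coordinate $w_j$ times, followed by the quotient onto $SP^{W_i}(\CP^1)$. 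Each map to the right of $(x_{i,j})_{j\neq i}$ is continuous — the iterated diagonal, the symmetric-product quotient, and the stabilization into $SP(\CP^1) = BU(1)$ all are — and none of them refers to the configuration: they are fixed maps on the ``direction'' coordinates. By the remark preceding the proposition (a consequence of Sinha's description of $\FM_3(n)$ in \cite{Sinha:04}), the map $(x_{i,j})_{i\neq j}\colon \Conf_n(\mathbb{R}^3) \to (S^2)^{n(n-1)}$ extends continuously to $\FM_3(n)$. Post-composing this extension with the product over $i$ of the $\mu_i^w$ and the stabilizations produces a continuous map $\FM_3(n) \to BU(1)^n$ restricting to $A_n^w$ on $\Conf_n(\mathbb{R}^3)$; combined with uniqueness this proves the statement.

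There is essentially no obstacle once the extension of the $x_{i,j}$ is granted: the remaining maps are standard continuous operations on symmetric products, the only mild point being that raising to the $w_j$-th power inside $SP(\CP^1)$ — the iterated diagonal followed by the quotient — is continuous, which is immediate. The actual content is entirely packaged in Sinha's model of $\FM_3(n)$ as a subspace of a product of copies of $S^2$ indexed by ordered pairs $(i,j)$, in which the coordinate functions are precisely the $x_{i,j}$ (on the boundary strata these record the infinitesimal relative directions at colliding points), so that each extended $x_{i,j}$ is literally a coordinate projection.
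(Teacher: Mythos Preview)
Your argument is correct and follows exactly the paper's approach: the paper simply states that the extension of the direction functions $x_{i,j}$ to $\FM_3(n)$ (from Sinha's model) ``implies'' the proposition, without further proof. Your write-up just makes explicit the implicit factorization through $(x_{i,j})_{j\neq i}$ and the density/Hausdorff argument for uniqueness, which is precisely what the paper leaves to the reader.
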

With a forgivable abuse of notation, we still denote the extension by $ A_n^w $.

Let $ n,k_1,\dots,k_n \in \mathbb{N} $ and let $ K = \sum_{i=1}^n k_i $. We define a map
\[
\nu_{n:k_1,\dots,k_n} \colon \FM_3(n) \times \prod_{i=1}^n D_{k_i}(BU(1)) \to D_K(BU(1))
\]
as the following composition:
\begin{gather*}
FM_n^3 \times \prod_{i=1}^n \FM_\infty(k_i) \times_{\Sigma_{k_i}} BU(1)^{k_i} \\
\stackrel{A_n^{(k_1,\dots,k_n)} \times \id}{\to} FM_n^3 \times BU(1)^n \times \prod_{i=1}^n \FM_\infty(k_i) \times_{\Sigma_{k_i}} BU(1)^{k_i} \\
\stackrel{\tau}{\to} \left( \FM_3(n) \times \prod_{i=1}^n \FM_\infty(k_i) \right) \times_{\prod_i \Sigma_{k_i}} \prod_{i=1}^n \left( BU(1) \times BU(1)^{k_i} \right) \\
\hookrightarrow \left( \FM_\infty(k_i) \times \prod_{i=1}^n \FM_\infty(k_i) \right) \times_{\prod_i \Sigma_{k_i}} \prod_{i=1}^n \left( BU(1) \times BU(1)^{k_i} \right) \\
\stackrel{\gamma \times \prod_{i=1}^n \mu_{k_i}}{\to} \FM_\infty(K) \times_{\Sigma_K} BU(1)^K,
\end{gather*}
where $ \tau $ permutes the factors in the obvious way, $ \gamma $ is the operadic composition map in $ \FM_{\infty} $ and $ \mu_k \colon BU(1) \times BU(1)^k \to BU(1)^k $ is given by the factor-wise product with the first element ($ BU(1) \cong SP^\infty(\mathbb{CP}^1) $ is a topological monoid).

In an entire similar way, we define maps $ \nu_{n:k_1,\dots,k_n} $ from $ \FM_2(n) \times  \prod_{i=1}^n D_{k_i}(BO(1)) $ to $ D_K(BO(1)) $, where $ K = \sum_{i=1}^n k_i $, via the obvious real version of the weighted Atiyah map $ A_n^{w,\mathbb{R}} \colon \FM_2(n) \to BO(1)^n $.

In order to prove that the maps $ \nu_{n:k_1,\dots,k_n} $ define an algebra structure over the Fulton MacPherson operad, we need to recall how the direction $ x_{i,j} $ behave with respect to operadic composition.
\begin{lemma} \label{lem:directions}
Let $ 1 \leq d \leq \infty $. Let $ \gamma \colon \FM_d(n) \times \prod_{i=1}^n \FM_d(m_i) \to \FM_d(\sum_i m_i) $ be the composition map of the Fulton--MacPherson operad. Then, for all $ p \in \FM_d(n) $, $ q^{(i)} \in \FM_d(m_i) $, and for all $ 1 \leq i,h \leq n $, $ 1 \leq j \leq m_i $, $ 1 \leq k \leq m_h $ with $ (i,j) \not= (h,k) $, the following equality holds:
\[
x_{\sum_{l=1}^{i-1} m_l + j, \sum_{l=1}^{h-1} m_l + k}(\gamma(p;q^{(1)},\dots,q^{(n)})) = \left\{ \begin{array}{ll}
x_{i,h}(p) & \mbox{if } i \not= h \\
x_{j,k}(q^{(i)}) & \mbox{if } i = h
\end{array} \right.
\]
\end{lemma}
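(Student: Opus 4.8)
\emph{Plan of proof.} The strategy is to reduce to a one-line computation on the dense locus of genuine configurations and then propagate the identity to all of $\FM_d$ by continuity. First I would recall, following Sinha \cite{Sinha:04}, that for $1 \le d < \infty$ the space $\FM_d(n)$ is the closure of the image of $\Conf_n(\R^d)$ (modulo translations and positive dilations) under the embedding into $\prod_{i\ne j} S^{d-1} \times \prod_{(i,j,k)} [0,\infty]$ whose $S^{d-1}$-factors are the normalized directions $x_{i,j}$; thus $x_{i,j}$ on $\FM_d(n)$ is just the (continuous) projection to the corresponding sphere factor, and $\Conf_n(\R^d)$ is dense in $\FM_d(n)$. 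I would also recall that on the interior the operad composition is given by infinitesimal insertion: for honest configurations $p=(p_1,\dots,p_n)$ and $q^{(i)}=(q^{(i)}_1,\dots,q^{(i)}_{m_i})$, the point $\gamma(p;q^{(1)},\dots,q^{(n)})$ is the limit as $\varepsilon\to 0^+$ of the honest configuration $r(\varepsilon)$ whose point of global index $\sum_{l<i} m_l+j$ is $p_i+\varepsilon\, q^{(i)}_j$. Since every $x_{a,b}$ extends continuously to $\FM_d(\sum_l m_l)$, this gives
\[
x_{a,b}\big(\gamma(p;q^{(1)},\dots,q^{(n)})\big)\;=\;\lim_{\varepsilon\to 0^+} x_{a,b}\big(r(\varepsilon)\big),\qquad a=\textstyle\sum_{l<i}m_l+j,\ \ b=\sum_{l<h}m_l+k .
\]

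Next I would evaluate this limit directly. Writing $r_a=p_i+\varepsilon q^{(i)}_j$ and $r_b=p_h+\varepsilon q^{(h)}_k$, we have $r_b-r_a=(p_h-p_i)+\varepsilon\big(q^{(h)}_k-q^{(i)}_j\big)$. If $i\ne h$, then $p_h\ne p_i$ and
\[
x_{a,b}(r(\varepsilon))=\frac{r_b-r_a}{\|r_b-r_a\|}\;\xrightarrow[\ \varepsilon\to 0^+\ ]{}\;\frac{p_h-p_i}{\|p_h-p_i\|}=x_{i,h}(p).
\]
If $i=h$ (so $j\ne k$), the $p_i$ terms cancel and the factor $\varepsilon$ cancels between numerator and denominator, so $x_{a,b}(r(\varepsilon))=\big(q^{(i)}_k-q^{(i)}_j\big)/\|q^{(i)}_k-q^{(i)}_j\|=x_{j,k}(q^{(i)})$ for every $\varepsilon>0$, hence in the limit. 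This establishes the claimed formula whenever $p$ and all the $q^{(i)}$ lie in the interior.

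Finally I would remove the interiority hypothesis by density. Continuity of the composition map $\gamma\colon \FM_d(n)\times\prod_i\FM_d(m_i)\to\FM_d(\sum_l m_l)$ together with continuity of all the direction maps $x_{a,b}$, $x_{i,h}$, $x_{j,k}$ lets us pass to the limit in the identity just proved: choosing sequences of honest configurations $p^{(s)}\to p$ and $q^{(i),(s)}\to q^{(i)}$, we get $\gamma(p^{(s)};q^{(1),(s)},\dots)\to\gamma(p;q^{(1)},\dots)$, and the identity for interior points specializes, in the limit, to the identity for $p$ and the $q^{(i)}$. The case $d=\infty$ is then immediate, since $\FM_\infty(n)=\operatorname{colim}_d\FM_d(n)$, the direction maps and the operadic compositions are compatible with the stabilizations $\FM_d\hookrightarrow\FM_{d+1}$, and any finite family of points of the various $\FM_\infty(m)$ already lies in a common finite stage. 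The only slightly delicate point is matching the ``$\varepsilon\to 0^+$'' description of $\gamma$ on the interior with the chosen model of $\FM_d$ and keeping the global-versus-local index bookkeeping straight; both are routine given Sinha's explicit coordinates, so I do not expect a genuine obstacle here.
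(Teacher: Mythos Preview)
Your proposal is correct and is precisely the ``direct inspection of Sinha's model'' that the paper invokes without spelling out; the paper gives no detailed argument at all, merely stating that the lemma is known and citing \cite[Section~5.2]{Lambrechts-Volic}. Your density-plus-continuity reduction to the interior, the $\varepsilon\to 0^+$ insertion computation, and the colimit argument for $d=\infty$ are exactly the routine verification the paper alludes to.
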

The lemma above is known and can be proved by a direct inspection of Sinha's model for $ \FM_d $. See, for instance, \cite[Section 5.2]{Lambrechts-Volic}.

\begin{proposition} \label{prop:operad action DBU1}
The maps $ \nu_{n:k_1,\dots,k_n} $ define an action of the operad $ \FM_3 $ on $ D(BU(1)) $.
Similarly, the maps $ \nu_{n:k_1,\dots,k_n}^{\mathbb{R}} $ define an action of the operad $ \FM_2 $ on $ D(BO(1)) $.
\end{proposition}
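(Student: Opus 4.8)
The strategy is to reduce the operad algebra axioms for the family $\{\nu_{n:k_1,\dots,k_n}\}$ to three ingredients, each of which is either classical or already available: the operad axioms for $\FM_\infty$; the fact that $BU(1)\cong SP^\infty(\CP^1)$ is a unital commutative topological monoid, so that the maps $\mu_k$ are associative and unital; and a factorization identity for the weighted Atiyah maps under operadic composition, which follows from Lemma~\ref{lem:directions} together with multiplicativity of products in $SP(\CP^1)$. Note first that each $\nu_{n:k_1,\dots,k_n}$ is well defined precisely because of Proposition~\ref{prop:extension}, which lets us evaluate the weighted Atiyah map on all of $\FM_3(n)$. Giving an $\FM_3$-algebra structure on $D(BU(1))=\bigsqcup_n D_n(BU(1))$ then amounts to checking, for these maps, (i) $\Sigma_n$-equivariance, (ii) unitality with respect to the distinguished point of $\FM_3(1)$, and (iii) associativity with respect to the operadic composition of $\FM_3$. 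Since each $\nu_{n:k_1,\dots,k_n}$ is assembled from the weighted Atiyah map $A_n^{(k_1,\dots,k_n)}$, the shuffle $\tau$, the operad inclusion $\FM_3\hookrightarrow\FM_\infty$, the composition $\gamma$ in $\FM_\infty$, and the monoid products $\mu_{k_i}$, each axiom is traced back to the corresponding property of these building blocks.

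The heart of the matter is (iii). Fix $p\in\FM_3(n)$ and inner elements $q^{(i)}\in\FM_3(m_i)$ for $1\le i\le n$, and let $k_{ij}$ (for $1\le i\le n$, $1\le j\le m_i$) be the arities of the innermost inputs, with $W_h=\sum_{l=1}^{m_h}k_{hl}$. Consider the composed configuration $\gamma(p;q^{(1)},\dots,q^{(n)})\in\FM_3(\sum_i m_i)$, whose $\sum_i m_i$ points we index by pairs $(i,j)$. The label attached by the weighted Atiyah map with weights $(k_{hl})$ to the point $(i,j)$ equals, in $SP(\CP^1)$,
\[
\prod_{(h,l)\neq(i,j)} x_{(i,j),(h,l)}\bigl(\gamma(p;q^{(1)},\dots,q^{(n)})\bigr)^{\,k_{hl}} \;=\; \Bigl(\prod_{l\neq j} x_{j,l}(q^{(i)})^{\,k_{il}}\Bigr)\cdot\Bigl(\prod_{h\neq i} x_{i,h}(p)^{\,W_h}\Bigr),
\]
where the equality uses Lemma~\ref{lem:directions} to split directions according to whether two indices lie in the same inner block, and then collects the contribution of each whole block $h$ into the single power $W_h$. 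The first factor on the right is exactly the label produced at position $j$ by the inner weighted Atiyah map $A_{m_i}^{(k_{i1},\dots,k_{im_i})}(q^{(i)})$, and the second factor is the label produced at position $i$ by the outer weighted Atiyah map $A_n^{(W_1,\dots,W_n)}(p)$; the monoid products $\mu$ then combine them. Together with associativity of $\gamma$ in $\FM_\infty$ for the underlying $\mathbb{R}^\infty$-configurations, the fact that $\FM_3\hookrightarrow\FM_\infty$ is an operad map, and associativity of the monoid $BU(1)$ for the labels, this shows that the two bracketings of a double operadic composition induce the same map.

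Axiom (ii) is easy: $\FM_3(1)$ is a point and $A_1^{(k)}$ returns the empty product, i.e. the unit $e\in BU(1)$, so $\mu_k(e,-)=\id$ and $\gamma(\ast;-)=\id$ for the $\FM_\infty$-unit $\ast$; hence $\nu_{1:k}=\id_{D_k(BU(1))}$. For (i), the $\Sigma_n$-action on $\FM_3(n)$ simultaneously permutes the $n$ points and the weight vector, hence permutes the $n$ components of $A_n^{(k_1,\dots,k_n)}(p)$ accordingly; matching this against the block permutation on $\prod_i D_{k_i}(BU(1))$ and invoking $\Sigma$-equivariance of $\gamma$ and naturality of $\tau$ gives the required equivariance. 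I expect the only genuine difficulty to be bookkeeping: translating between the linearized index $\sum_{l<i}m_l+j$ and the pair $(i,j)$ through two levels of composition, and keeping the associativity and equivariance constraints of the various $(-)\times_{\Sigma}(-)$ quotients coherent; the conceptual content is entirely contained in the displayed factorization. Finally, the statement for $D(BO(1))$ and $\FM_2$ is proved word for word, replacing $\CP^1$, $BU(1)$, $SP^\infty(\CP^1)$, and $\FM_3$ by $\RP^1$, $BO(1)$, $SP^\infty(\RP^1)$, and $\FM_2$, and using Lemma~\ref{lem:directions} in the case $d=2$.
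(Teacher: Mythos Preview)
Your proof is correct and follows essentially the same approach as the paper: both reduce associativity to the factorization identity for the weighted Atiyah labels under operadic composition, derived from Lemma~\ref{lem:directions}, combined with the associativity of $\gamma$ in $\FM_\infty$ and of the monoid product on $BU(1)$. Your displayed identity is precisely the paper's verification that $a_j=b_j$; you are in fact a bit more thorough, since you also address the $\Sigma_n$-equivariance and unitality axioms, which the paper leaves implicit.
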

\begin{proof}
We focus on the complex case, as the real case is analogous.

Let $ \mu \colon BU(1) \times BU(1) \to BU(1) $ be the product. We need to prove that the following diagram commutes (here we omit the indices in $ \nu $ to avoid adding weight to notation and we let $ M = \sum_{i=1}^n m_i $, $ L_i = \sum_{j = (\sum_{k=1}^{i-1} m_k) + 1}^{\sum_{k=1}^i m_k} l_j $, $ L = \sum_{j=1}^M l_j $):
\begin{center}
\adjustbox{scale=0.7,center}{%
\begin{tikzcd}%[column sep=2.5cm]
\FM_3(n) \times \prod_{i=1}^n \FM_3(m_i) \times \prod_{j=1}^M (\FM_\infty(l_j) \times_{\Sigma_{l_j}} BU(1)^{l_j}) \arrow{r}{\id_{\FM_3(n)} \times \prod_{i=1}^n \nu} \arrow{d}{\gamma \times \prod_{j=1}^M \id_{\FM_\infty(l_j) \times_{\Sigma_{l_j}} BU(1)^{l_j}}} & \FM_3(n) \times \prod_{i=1}^n \left( \FM_\infty(L_i) \times_{\Sigma_{L_i}} BU(1)^{L_i} \right) \arrow{d}{\nu} \\
\FM_3(M) \times \prod_{j=1}^M \left( \FM_\infty(l_j) \times_{\Sigma_{l_j}} BU(1)^{l_j} \right) \arrow{r}{\nu} & \FM_\infty(L) \times_{\Sigma_L} BU(1)^L
\end{tikzcd}
}
\end{center}

Let $ p \in \FM_3(n) $, $ q^{(i)} \in \FM_3(m_i) $, $ r^{(j)} \in \FM_\infty (l_j) $, $ y_j \in BU(1) $.
By unraveling the definitions, we easily see that the top-right composition applied to $ p \times \prod_{i=1}^n q^{(i)} \times \prod_{j=1}^M r^{(j)} \times_{\Sigma_{l_j}} y_j $ is
\begin{gather*}
\Big( \gamma(p;\gamma(q^{(1)};r^{(1)},\dots,r^{(m_1)}),\dots,\gamma(q^{(n)};r^{(\sum_{i=1}^{n-1}m_i)},\dots,r^{(M)})),\\
(\mu(a_1,y_1),\dots,\mu(a_L,y_L))\Big), \\
\mbox{ where } a_{(\sum_{k=1}^{i-1} m_i) + j} = \prod_{h \not= i} x_{i,h}(p)^{L_h} \prod_{k \not= j} x_{j,k}(q^{(i)})^{l_{i,k}} \in BU(1).
\end{gather*}
Similarly, the left-bottom composition yields
\begin{gather*}
\left( \gamma(\gamma(p;q^{(1)},\dots,q^{(n)});r^{(1)},\dots,r^{(M)}),(\mu(b_1,y_1),\dots,\mu(b_L,y_L))\right), \\
\mbox{ where } b_j = \prod_{k \not= j} x_{j,k}(\gamma(p;q^{(1)},\dots,q^{(n)}))^{l_j} \in BU(1).
\end{gather*}
The $ \FM_3(L) $-factors of the two expressions are equal by the associativity of the operad composition of $ \FM_3 $, while $ a_j = b_j $ by Lemma \ref{lem:directions}.
\end{proof}

We state here a new version of the Atiyah--Sutcliffe conjecture, that guarantees that we can lift the $ E_3 $- (respectively $ E_2 $-) structure, at least homotopically, to $ \bigsqcup_n \overline{\Fl}_n(\mathbb{C}) $ (respectively $ \bigsqcup_n \overline{\Fl}_n(\mathbb{R}) $), the disjoint union of the complete unordered flag manifolds $ \overline{\Fl}_n(\mathbb{C}) = \frac{\Fl_n(\mathbb{C})}{\Sigma_n} $ (respectively $ \overline{\Fl}_n(\mathbb{R}) = \frac{\Fl_n(\mathbb{R})}{\Sigma_n} $). $ \overline{\Fl}_n(\mathbb{C}) $ is homotopy equivalent to the homotopy fiber of the classifying map $ D_n(BU(1)) \to BU(n) $ induced by the standard inclusion $ \Sigma_n \wr BU(1) = \pi_1(D_n(BU(1))) \hookrightarrow U(n) $.
This fiber can be identified with the subspace
\begin{gather*}
F_n = \{ (p_1,\dots,p_n) \times_{\Sigma_n} (x_1,\dots,x_n) \in D_n(BU(1)) = E(\Sigma_n) \times_{\Sigma_n} (\CP^{\infty})^n:\\
(x_1,\dots,x_n) \mbox{ are projectively independent in } \CP^{n-1} \subset \CP^{\infty} \}.
\end{gather*}

Therefore, having an induced $ E_3 $-structure on $ \bigsqcup_n \overline{\Fl}_n(\mathbb{C}) $ is equivalent to the following statement.
\begin{Atiyah-E3}
$ \nu_{n:k_1,\dots,k_n}(\FM_3(n) \times \prod_{i=1}^n F_{k_i}) \subseteq F_K $, where $ K = \sum_{i=1}^n k_i $.
Equivalentely, the $ K $ elements of $ BU(1) = \CP^{\infty} $ obtained via the map $ \nu_{n:k_1,\dots,k_n} $ from any point of $ \FM_3(n) $ and points of $ F_{k_i} $ are always projectively independent.
\end{Atiyah-E3}

One can define $ F_n^{\mathbb{R}} \subseteq D_n(BO(1)) $ similarly to $ F_n $, by requiring that the labels are projectively independent in $ \RP^{n-1} $. The following is the real analog of this conjecture.
\begin{Atiyah-E2-real}
$ \nu_{n:k_1,\dots,k_n}^{\mathbb{R}}(\FM_2(n) \times \prod_{i=1}^n F_{k_i}^{\mathbb{R}}) \subseteq F_K^{\mathbb{R}} $, where $ K = \sum_{i=1}^n k_i $.
Equivalentely, the $ K $ elements of $ BO(1) = \RP^{\infty} $ obtained via the map $ \nu_{n:k_1,\dots,k_n}^{\mathbb{R}} $ from any point of $ \FM_2(n) $ and points of $ F_{k_i}^{\mathbb{R}} $ are always projectively independent.
\end{Atiyah-E2-real}

This statement is of intermediate strength with respect to the other versions of the conjecture, as stated below.
\begin{theorem}
\begin{enumerate}
\item if the strong Atiyah--Sutcliffe conjecture is true, then so is the $ E_3 $ one
\item if the $ E_3 $ Atiyah--Sutcliffe conjecture is true, then so is the weak one
\item if the strong real Atiyah--Sutcliffe conjecture is true, then so is the $ E_2 $ one
\item if the $ E_2 $ real Atiyah--Sutcliffe conjecture is true, then so is the weak one
\end{enumerate}
\end{theorem}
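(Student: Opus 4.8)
The plan is to obtain parts (2) and (4) by a one-line specialization and parts (1) and (3) by identifying the determinant that decides the $ E_3 $- (resp.\ $ E_2 $-) conjecture with an Atiyah--Sutcliffe determinant $ \varphi $ evaluated at a point of a larger Fulton--MacPherson space obtained by infinitesimally clustering points. For (2), apply the $ E_3 $ Atiyah--Sutcliffe conjecture with $ k_1 = \dots = k_n = 1 $, so $ K = n $ and each $ F_{k_i} = F_1 = D_1(BU(1)) = BU(1) $ (a single label is vacuously projectively independent). Feed in an honest configuration $ p = (p_1,\dots,p_n) \in \Conf_n(\mathbb{R}^3) \subseteq \FM_3(n) $ and the unit $ 1 \in SP^\infty(\CP^1) = BU(1) $ in each input slot. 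Since $ \mu_1(a,y) = ay $ and $ A_n^{(1,\dots,1)}(p)_i = \prod_{j\ne i} x_{i,j}(p) $, the $ i $-th output label is $ \prod_{j\ne i} x_{i,j}(p) = \ell_i(p_1,\dots,p_n) $, so $ \nu_{n:1,\dots,1}(p;1,\dots,1)\in F_n $ is exactly the statement that $ \ell_1(p_1,\dots,p_n),\dots,\ell_n(p_1,\dots,p_n) $ are projectively independent, i.e.\ the weak conjecture. Part (4) is identical with $ \FM_3 $ replaced by $ \FM_2 $ and $ BU(1) $ by $ BO(1) $.

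For (1), I would first reduce the $ E_3 $ conjecture for a fixed $ (n;k_1,\dots,k_n) $ to a statement about $ p \in \FM_3(n) $ alone. Put $ g_i = g_i(p) = \prod_{h\ne i} x_{i,h}(p)^{k_h} \in \CP^{K-k_i} = \mathbb{P}(SP^{K-k_i}(\mathbb{C}^2)) $, so $ A_n^{(k_1,\dots,k_n)}(p)_i = g_i $. Unwinding $ \nu_{n:k_1,\dots,k_n} $, the $ K $ output labels are the products $ g_i \cdot y^{(i)}_j $ in the monoid $ SP^\infty(\CP^1) $, where $ y^{(i)}_1,\dots,y^{(i)}_{k_i} $ are the labels of the input $ z_i \in F_{k_i} $; in particular the output labels do not depend on the configuration coordinates of the $ z_i $. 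Since $ z_i \in F_{k_i} $, the $ y^{(i)}_j $ span $ \CP^{k_i-1} = \mathbb{P}(SP^{k_i-1}(\mathbb{C}^2)) $, so the $ k_i $ output labels of block $ i $ span the $ k_i $-dimensional subspace $ g_i\cdot SP^{k_i-1}(\mathbb{C}^2)\subseteq SP^{K-1}(\mathbb{C}^2) $. Hence the $ E_3 $ conjecture for $ (n;k_1,\dots,k_n) $ is \emph{equivalent} to: for every $ p\in\FM_3(n) $ the subspaces $ g_i(p)\cdot SP^{k_i-1}(\mathbb{C}^2) $, $ i=1,\dots,n $, are in direct sum, equivalently (by a dimension count) their sum is all of $ SP^{K-1}(\mathbb{C}^2) $.

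Next I would realize this condition as non-vanishing of $ \varphi $ and then invoke the strong conjecture. The strong conjecture implies the weak one, so for each $ i $ I may fix an honest configuration $ \tilde r^{(i)}\in\Conf_{k_i}(\mathbb{R}^3) $ whose Atiyah vectors $ v_1(\tilde r^{(i)}),\dots,v_{k_i}(\tilde r^{(i)}) $ form a basis of $ SP^{k_i-1}(\mathbb{C}^2) $, and form the clustered point $ \gamma\big(p;\tilde r^{(1)},\dots,\tilde r^{(n)}\big)\in\FM_3(K) $ using the operad composition of $ \FM_3 $. By Lemma~\ref{lem:directions}, the direction of the pair $ \big((i,j),(h,k)\big) $ at this point is $ x_{i,h}(p) $ when $ i\ne h $ and $ x_{j,k}(\tilde r^{(i)}) $ when $ i=h $; hence the $ (i,j) $-th Atiyah vector is $ g_i(p)\cdot v_j(\tilde r^{(i)}) $, and $ \varphi\big(\gamma(p;\tilde r^{(1)},\dots,\tilde r^{(n)})\big) = \det\big[\,g_i(p)\cdot v_j(\tilde r^{(i)})\,\big] $. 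By the choice of the $ \tilde r^{(i)} $ this determinant is non-zero exactly when the $ g_i(p)\cdot SP^{k_i-1}(\mathbb{C}^2) $ span $ SP^{K-1}(\mathbb{C}^2) $, which is the condition of the previous paragraph. Finally, $ \varphi $ is nowhere zero on $ \FM_3(K) $: the directions $ x_{a,b} $ extend continuously from $ \Conf_K(\mathbb{R}^3) $ to $ \FM_3(K) $ (Sinha's model), so $ \varphi $ and the two-point determinants $ \varphi_I $, $ |I|=2 $, extend continuously; each $ \varphi_I $ is at every point the determinant of representatives of two mutually orthogonal lines in $ \mathbb{C}^2 $ (the two directions of a $ 2 $-point sub-configuration are a pair of antipodal points of $ S^2 $), hence never vanishes; so $ D=\varphi/\prod_{|I|=2}\varphi_I $ extends continuously to $ \FM_3(K) $, and since $ \Conf_K(\mathbb{R}^3) $ is dense there and $ |D|\ge 1 $ on it by the strong conjecture, we get $ |D|\ge 1 $, in particular $ \varphi\ne 0 $, on all of $ \FM_3(K) $. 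This proves (1); (3) follows mutatis mutandis, using $ \FM_2 $, the real strong and weak conjectures, and real binary forms and $ \RP^\bullet $ in place of their complex counterparts.

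The main obstacle is the identification in the third paragraph: recognizing the $ K $ output labels of $ \nu_{n:k_1,\dots,k_n} $ — up to the choice of an auxiliary configuration inside each cluster — as the Atiyah vectors of a single point of $ \FM_3(K) $ obtained by collapsing $ k_i $ of its points onto the $ i $-th point of $ p $; this is the conceptual heart, and Lemma~\ref{lem:directions} is exactly the bookkeeping that makes it go. The other delicate point is the last step: one must be sure that the normalizing product $ \prod_{|I|=2}\varphi_I $ has no zeros on the Fulton--MacPherson compactification, so that the strong inequality really propagates from $ \Conf_K(\mathbb{R}^3) $ to the boundary strata of $ \FM_3(K) $ on which the clustered points actually lie.
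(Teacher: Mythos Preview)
Your proof is correct and follows essentially the same approach as the paper: for (2) and (4) you both specialize to $k_1=\dots=k_n=1$, and for (1) and (3) you both extend $D$ continuously to $\FM_3(K)$, use density and the strong conjecture to get $\varphi\neq 0$ there, and then insert auxiliary configurations into each cluster via $\gamma$ and Lemma~\ref{lem:directions} to identify the resulting Atiyah vectors with $g_i\cdot(\text{basis of }SP^{k_i-1}(\mathbb{C}^2))$. The only cosmetic difference is that the paper chooses the explicit collinear configuration $s^{(k_i)}=((0,0,0),\dots,(0,0,k_i-1))$, whose Atiyah vectors are the monomial basis $1,t,\dots,t^{k_i-1}$, whereas you pick any $\tilde r^{(i)}$ whose Atiyah vectors are independent; the deduction is otherwise identical.
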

\begin{proof}
\begin{enumerate}
\item As we mentioned above, the functions $ x_{i,j} $ extend continuously from the configuration spaces to the Fulton-MacPherson compactifications. As $ \varphi $ and $ \varphi_I $ (for $ I \subseteq \{1,\dots, n\} $) only depend on $ x_{i,j} $ (and a choice of representatives), they can be defined on $ \FM_3(n) $ with the same construction performed on $ \Conf_n(\mathbb{R}^3) $.

One explicitly sees from the construction that, for a suitable choice of representatives, for every $ I $ with $ |I| = 2 $ and for every $ p \in \FN^3_n $, $ \varphi_I(p) $ has the form
\[
	\varphi_I(p) = \det \left[ \begin{array}{cc}
1 & 1 \\
z & -\overline{z}^{-1}
\end{array} \right],
\]
for some $ z \in \mathbb{C} \cup \{ \infty \} $.
In particular, $ \varphi_2(p) \not= 0 $ for all $ p \in \FM_3(n) $.
Consequently, the ratio $ \varphi/\varphi_2 $ is well-defined on all $ \FM_3(n) $, does not depend on the chosen representatives, and provides a continuous extension of the Atiyah--Sutcliffe determinant $ D $ to $ \FM_3(n) $.

If the strong Atiyah--Sutcliffe conjecture is true, then $ |D(p)| \geq 1 $ for all $ p \in \Conf_n(\mathbb{R}^3) $. By continuity and the density of $ \Conf_n(\mathbb{R}^3) $ in $ \FM_3(n) $, $ |D(p)| \geq 1 $ for all $ p \in \FM_3(n) $, and in particular $ \varphi(p) \not= 0 $. It follows that $ A^{(1,\dots,1)}_n(p) $ consists of projectively independent points.

Recall that there is a multiplication-preserving homeomorphism between $ \CP^{\infty} $ and the projectivization of the space of complex polynomials.
For all $ m \in \mathbb{N} $, we consider the distinguished configuration $ s^{(n)} = (s_0,\dots,s_{n-1}) \in \Conf_n(\mathbb{R}^3) $, where $ s_i = (0,0,i) $. We observe that $ A^{(1,\dots,1)}_n(s_n) $ is represented by the standard basis $ (1,t,t^2,\dots,t^{n-1}) $ of the space $ \mathbb{C}[t]_{\leq n-1} $ of polynomials of degree $ \leq n-1 $.
Let $ k_1,\dots,k_n \in \mathbb{N} $, let $ K = \sum_{i=1}^n k_i $ and let $ p \in \FM_3(n) $. Let $ A^{(k_1,\dots,k_n)}_n(p) = ([f_1(t)],\dots,[f_n(t)]) $, for some polynomials $ f_n(t) \in \mathbb{C}[t] $.
By Proposition \ref{prop:operad action DBU1},
\begin{gather*}
A^{(1,\dots,1)}(\gamma(p;s^{(k_1)},\dots,s^{(k_n)})) =\\ ([f_1(t)],[f_1(t)t],\dots,[f_1(t)t^{k_1-1}],[f_2(t)],[f_2(t)t] \dots, [f_n(t)t^{k_n-1}]).
\end{gather*}
Since these polynomials must be linearly independent, they must constitute a basis for $ \mathbb{C}[t]_{K-1} $. If $ r_i = (q^{(i)},(y_{i_1},\dots,y_{i,k_i})) \in F_{k_i} $, then $ y_{i,1},\dots,y_{i,k_i} $ must be represented by a basis of $ \mathbb{C}[t]_{\leq k_i-1} $, so each monomial $ t^j $ (for $ 0 \leq j < k_ i$) must be a linear combination of them. Consequently, each polynomial of the form $ f_i(t)t^j $ (for $ 1 \leq i \leq n $ and $ 0 \leq j < k_i $) must be a linear combination of the labels of $ \nu_{n:k_1,\dots,k_n}(p,r_1,\dots,r_n) $. Thus, these labels generate $ \mathbb{C}[t]_{\leq K-1} $ and thus are projectively independent by dimensional reasons.
\item The weak Atiyah--Sutcliffe conjecture is a particular case of the $ E_3 $ one when $ k_1=\dots k_n = 1 $.
\item It is proved with the same argument used for $ (1) $.
\item It is proved with the same argument used for $ (2) $.
\end{enumerate}
\end{proof}

\section{The underlying $ E_2 $-structure on $ D(BU(1)) $ and its lifts to unordered flag manifolds}

$ D(BU(1)) $ and $ D(BO(1)) $, as shown in the previous section, admit an exotic action of the $ 3 $- and $ 2- $dimensional Fulton-Mac Pherson operad, respectively, induced by Atiyah's construction. There are also ``classical'' $ E_\infty $-structures on $ D(BU(1)) $ and $ D(BO(1)) $ (which are the free non-unital $ E_\infty $-algebras generated by $ BU(1) $ and $ BO(1) $), that restricts to an $ E_3 $-structure on $ D(BU(1)) $ and to an $ E_2 $-structure on $ D(BO(1)) $.
We first compare the two underlying $ E_2 $-structures on $ D(BU(1)) $ and $ E_1 $-structures on $ D(BO(1)) $.

\begin{proposition} \label{prop:E2 same}
	The underlying $ E_2 $-structures of the classical and Atiyah $ E_3 $-structures on $ D(BU(1)) $ are homotopic. Similarly, the underlying $ E_1 $-structures of the classical and Atiyah $ E_2 $-structures on $ D(BO(1)) $ are homotopic.
\end{proposition}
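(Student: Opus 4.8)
The plan is to realise the two underlying $ E_2 $-structures as the endpoints of a continuous one-parameter family of $ \FM_2 $-algebra structures on $ D(BU(1)) $, built by deforming Atiyah's construction directly. The only difference between the restriction to $ \FM_2 \subset \FM_3 $ of the Atiyah action and the restriction along $ \FM_2 \hookrightarrow \FM_\infty $ of the classical free-$ E_\infty $ action is that, in the definition of $ \nu_{n:k_1,\dots,k_n} $, the weighted Atiyah map $ A_n^{(k_1,\dots,k_n)} $ produces extra labels in $ BU(1)^n $ which are then multiplied into the given labels by the maps $ \mu_{k_i} $; the classical action is recovered by feeding the monoid unit in place of these extra labels. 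So it suffices to deform $ A_n^{(k_1,\dots,k_n)}|_{\FM_2(n)} $ to the constant map at the unit, compatibly with operad composition.

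The key geometric input is that the operad inclusion $ \FM_2(n) \hookrightarrow \FM_3(n) $ induced by $ \mathbb{R}^2 \hookrightarrow \mathbb{R}^3 $ carries, by functoriality of Sinha's model, the direction functions $ x_{i,j} $ of $ \FM_3(n) $ to the direction functions of $ \FM_2(n) $, and the latter all take values in the equatorial circle $ S^1 = S^2 \cap \mathbb{R}^2 $, which under $ S^2 \cong \mathbb{CP}^1 $ is a circle in $ \mathbb{CP}^1 $ avoiding the basepoint $ * $ used to stabilise $ SP^k(\mathbb{CP}^1) \hookrightarrow SP^{k+1}(\mathbb{CP}^1) $, i.e. the unit of the monoid $ BU(1) = SP^\infty(\mathbb{CP}^1) $. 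Choosing coordinates so that this circle is $ \{|z|=1\} \subset \mathbb{C} \subset \mathbb{CP}^1 $ and writing a polynomial $ a_0 + a_1 t $ as $ [a_0 : a_1] $, the map
\[
h \colon S^1 \times [0,1] \longrightarrow \mathbb{CP}^1 \subseteq \mathbb{CP}^\infty = BU(1), \qquad h(z,s) = [\,{-z}\,:\,1-s\,]
\]
is well defined and continuous (its coordinate vector never vanishes since $ |z|=1 $), with $ h(z,0) $ the point $ z \in \mathbb{CP}^1 $ (the linear polynomial with root $ z $) and $ h(z,1) = [1:0] = * $. I would then set
\[
H_n^w(p,s) \;=\; \Bigl(\; {\textstyle\prod_{j \ne i}}\, h\bigl(x_{i,j}(p),s\bigr)^{w_j} \;\Bigr)_{i=1}^{n} \;\in\; BU(1)^n, \qquad p \in \FM_2(n),\; s \in [0,1],
\]
with products taken in the monoid $ BU(1) $, so that $ H_n^w(-,0) = A_n^w|_{\FM_2(n)} $ and $ H_n^w(-,1) $ is constant at the unit.

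Define $ \nu^{(s)}_{n:k_1,\dots,k_n} $ by the same composite that defines $ \nu_{n:k_1,\dots,k_n} $, but with $ p $ restricted to $ \FM_2(n) $ and $ A_n^{(k_1,\dots,k_n)} $ replaced by $ H_n^{(k_1,\dots,k_n)}(-,s) $. That $ \{\nu^{(s)}_{n:k_1,\dots,k_n}\}_n $ is an $ \FM_2 $-algebra structure for each fixed $ s $ is proved exactly as Proposition~\ref{prop:operad action DBU1}: the $ \FM_\infty $-configuration factors of the associativity square are untouched by the deformation, and the label factors again reduce to the identity $ a_j^{(s)} = b_j^{(s)} $, which follows from Lemma~\ref{lem:directions} because $ h(-,s) $ is applied to each direction separately and $ BU(1) $ is commutative. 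At $ s=0 $ this is the restriction of the Atiyah $ \FM_3 $-action; at $ s=1 $ the maps $ \mu_{k_i}(\,\cdot\,,-) $ become multiplication by the unit, so the composite collapses to $ (p;r_1,\dots,r_n) \mapsto \bigl(\gamma_{\FM_\infty}(\iota(p); \ldots),\ \text{concatenation of the given labels}\bigr) $, which is precisely the free-$ E_\infty $ action of $ \FM_\infty $ restricted along $ \FM_2 \hookrightarrow \FM_\infty $. Assembling the $ \nu^{(s)} $ into a single $ \FM_2 $-action on the trivial family $ D(BU(1)) \times [0,1] \to [0,1] $ (acting fibrewise over $ [0,1] $), with these two actions as its fibres over $ 0 $ and $ 1 $, gives the desired homotopy. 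The real statement is obtained by the identical argument with $ \FM_1 \hookrightarrow \FM_2 $, the equatorial $ S^0 \subset S^1 \cong \mathbb{RP}^1 $, and $ h(\epsilon,s) = [\,{-\epsilon}\,:\,1-s\,] \in \mathbb{RP}^1 \subseteq \mathbb{RP}^\infty = BO(1) $.

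The step I expect to be the real obstacle is carrying the deformation through coherently: one must check that $ \nu^{(s)} $ is an action for \emph{every} $ s $ — the index-and-weight bookkeeping of Proposition~\ref{prop:operad action DBU1} with the parameter $ s $ along for the ride, so that Lemma~\ref{lem:directions} still applies directionwise — and, crucially, that the chosen contraction of the direction-circle terminates exactly at the monoid unit, since this is what makes the maps $ \mu_{k_i} $ disappear at $ s=1 $ and identifies $ \nu^{(1)} $ with the genuine classical action rather than a twisted variant. Two minor technical points: ``homotopic $ E_2 $-structures'' should be made precise (I would take it to mean the existence of such a fibrewise family over $ [0,1] $, which in particular yields an equivalence of $ \FM_2 $-algebras); and the continuity of $ h $ at $ s=1 $, where the polynomial degree drops, is harmless because $ BU(1) = \operatorname{colim}_d \mathbb{CP}^d $ and the coordinate vector $ ({-z},1-s) $ never vanishes on the equator.
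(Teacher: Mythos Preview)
Your proposal is correct and is essentially the same argument as the paper's: both deform the direction map $S^1 \hookrightarrow S^2 \cong \mathbb{CP}^1$ to the constant map at the monoid unit of $BU(1)$, check that the resulting $\nu^{(s)}$ is an $\FM_2$-action for every $s$ by the same bookkeeping as Proposition~\ref{prop:operad action DBU1} via Lemma~\ref{lem:directions}, and identify the endpoints with the Atiyah and classical actions. The only difference is cosmetic: the paper parametrises the homotopy on $S^2\subset\mathbb{R}^3$ by normalising the straight-line homotopy $t\,i(x)+(1-t)N$, whereas you write the same contraction in projective coordinates as $[-z:1-s]$; both land at the pole that serves as the stabilisation basepoint and hence the unit of $SP^\infty(\mathbb{CP}^1)$.
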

\begin{proof}
	We provide a proof in the complex case, as the real case is entirely similar.
	Consider a function $ f \colon S^1 \to S^2 $. We can restrict weighted Atiyah maps along $ f $ by observing that the vectors $ x_{i,j} $ constructed from a configuration $ (p_1,\dots,p_n) \in \Conf_n(\mathbb{R}^3) $ belong to $ S^1 \subset S^2 $ if $ p_1,\dots,p_n \in \mathbb{R}^2 \subset \mathbb{R}^3 $. Explicitly, for a weight $ w $, we define $ A_n^{w,f} \colon FM_n^2 \to BU(1)^n $ by
	\[
	A_n^{w,f} = (\prod_{j\not=1} f(x_{1,j})^{w_j}, \dots, \prod_{j\not=n} f(x_{n,j})^{w_j}) \in (\mathbb{CP}^{\sum_j w_j})^n \subseteq (\mathbb{CP}^\infty)^n = BU(1)^n.
	\]
	
	We can replace $ A_n^w $ with $ A_n^{w,f} $ in the defintion of $ \nu_{n:k_1,\dots,k_n} $ of Section~3 and obtain modified maps
	\[
	\nu_{n:k_1,\dots,k_n}^{(f)} \colon FM_n^{(2)} \times \prod_{i=1}^n D_{k_i}(BU(1)) \to D_K(BU(1)).
	\]
	
	Note that the structural maps of the $ E_2 $ structure obtained as restriction of the Atiyah $ E_3 $ structure are $ \nu_{n:k_1,\dots,k_n}^{(i)} $, where $ i \colon S^1 \to S^2 $ is the standard embedding, while the restriction of the classical $ E_3 $ structure is given by $ \nu_{n:k_1,\dots,k_n}^{(1_{N})} $, where $ 1_N $ is the constant map equal to the north pole of $ S^2 $.
	
	There is a homotopy $ H \colon [0,1] \times S^1 \to S^2 $ between $ i $ and $ 1_N $ given by
	\[
	H(t,x) = \frac{ti(x) + (1-t)N}{||ti(x)+(1-t)N||}.
	\]
	Let $ h_t(x) = H(t,x) $. The maps $ \nu_{n:k_1,\dots,k_n}^H \colon [0,1] \times FM_n^2 \times \prod_{i=1}^n D_{k_i}(BU(1)) \to D_K(BU(1)) $ defined as
	\[
	\nu_{n:k_1,\dots,k_n}^H(t,x)= \nu_{n:k_1,\dots,k_n}^{h_t}(x)
	\]
	assemble to a homotopy of $ E_2 $-structures between the Atiyah and the classical one.
\end{proof}

A lift of the underlying $ E_2 $-structure to unordered flag manifolds exists without assuming Atiyah's conjecture. As a prelimiary step to prove this fact, we state a technical lemma.

To proof the following lemma, we recall that the subspace of decomposables $ \Dec_{\mathcal{O}}(X) $ of an algebra $ X $ over an operad $ \mathcal{O} $ consists of the elements of $ X $ that arise by an application of operations in $ \mathcal{O} $ of arity at least $ 2 $. An operad is reduced if $ \mathcal{O}(1) $ is a point.

We also recall that the $ W $ construction of a reduced topological operad $ \mathcal{O} $ is another topological operad constructed as follows. We consider the set $ \Tree(n) $ of isomorphism classes of rooted trees with non-leaf and non-root vertices of valence at least $ 3 $, with $ n $ leaves labeled with numbers in $ \{1,\dots,n\} $. We say that a vertex is internal if it neither a leaf nor the root. We say that an edge is internal if it connects two internal vertices. For every such tree $ T $, there is a unique way of orienting its edges such that there exists a unique directed path from every vertex to the root. We will always assume that the edges of $ T $ are oriented this way. We denote with $ r_v $ the number of incoming edges of an internal vertex $ v $. We let $ V(T) $, $ L(T) $ and $ E(T) $ be the sets of internal vertices, leaves and internal edges of $ T $, respectively.
$ W\mathcal{O}(n) $ consists of trees $ T \in \Tree(n) $ with a label in $ \mathcal{O}(r_v) $ attached to each internal vertex $ v \in V(T) $ and a length $ t_e \in [0,1] $ assigned to each internal edge $ e \in E(T) $, with the following identifications: if the length of an edge $ e $ is $ 0 $, this is identified with the tree obtained by collapsing $ e $ and replacing the labels at its vertices with their operadic composition in $ \mathcal{O} $. Operadic composition in $ W\mathcal{O} $ is defined by grafting two trees, keeping vertex labels and edge lengths, and declaring the new edge of the resulting tree to have length $ 1 $. We refer to the classical work of Bordman and Vogt \cite{Boardman-Vogt} for details.

\begin{lemma} \label{lem:cellular replacement}
Let $ \FM_2 $ be the $ 2 $-dimensional Fulton--MacPherson operad. Endow $ \mathbb{N}$ with the $ E_2 $-action induced by its commutative monoid structure. There is a sequence of $ \FM_2 $-algebras $ \{X_n\}_{n \geq 0} $ with $ \FM_2 $-maps $ q_n \colon X_n \to \mathbb{N} $ having the following properties:
\begin{enumerate}
\item $ X_1 = \FM_2(S^0) $
\item for all $ n \geq 1 $, $ X_{n+1} $ is obtained from $ X_n $ by attaching an $ E_2 $-cell of dimension $ 2n $, in the sense of \cite{GKR}
\item the topological pair $ (q_n^{-1}(k),q_n^{-1}(k) \cap \Dec_{\FM_2}(X_n)) $ is homeomorphic to $ (D^{2k-2},S^{2k-3}) $ for all $ k \leq n $
\item $ q_n^{-1}(n+1) $ is homeomorphic to a sphere of dimension $ (2n-1) $
\end{enumerate}
\end{lemma}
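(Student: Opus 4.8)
The plan is to build $\{X_n\}_{n\ge 0}$ by induction, attaching a single free $E_2$-cell at each stage, and to read off (3) and (4) from an inductive description of the fibres of $q_n$. Put $X_0=\varnothing$ (the initial $\FM_2$-algebra) and $X_1=\FM_2(S^0)$, the free $\FM_2$-algebra on a point, so that $X_1=\bigsqcup_{k\ge 1}\FM_2(k)/\Sigma_k$, and let $q_1\colon X_1\to\N$ be the map of $\FM_2$-algebras sending the generator to $1$; then $q_1^{-1}(k)=\FM_2(k)/\Sigma_k$. So $q_1^{-1}(1)$ is a point, the pair $(D^0,S^{-1})$, while $q_1^{-1}(2)=\FM_2(2)/\Sigma_2\cong\RP^1\cong S^1$ is the sphere of dimension $2\cdot 1-1$; since $X_1$ is generated by $q_1^{-1}(1)$ this establishes (1), (3) for $n\le 1$, and (4) for $n=1$, and the passage $X_0\to X_1$ is itself the attachment of an $E_2$-cell of dimension $0$, so (2) holds for $n=0$.

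For the inductive step, suppose $X_n$ and $q_n$ are constructed, satisfy (2)--(4), and $X_n$ is generated as an $\FM_2$-algebra by $\bigsqcup_{k\le n}q_n^{-1}(k)$ (i.e.\ has no algebra generator in level $>n$). Since the action of $\FM_2$ on $\N$ is additive, every element of level $n+1$ is then decomposable, so $q_n^{-1}(n+1)\subseteq\Dec_{\FM_2}(X_n)$, and by (4) for $n$ it is homeomorphic to $S^{2n-1}$. Define
\[
X_{n+1}=X_n\ \cup_{\FM_2(q_n^{-1}(n+1))}\ \FM_2\bigl(C\,q_n^{-1}(n+1)\bigr),
\]
the pushout of $\FM_2$-algebras along the inclusion $q_n^{-1}(n+1)\hookrightarrow X_n$ (extended freely) and the cone inclusion. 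As $q_n^{-1}(n+1)\cong S^{2n-1}$ and $C\,q_n^{-1}(n+1)\cong D^{2n}$, this is exactly the attachment of an $E_2$-cell of dimension $2n$ in the sense of \cite{GKR}, so (2) holds; extend $q_n$ to $q_{n+1}$ by sending the new cell to $n+1$, which is consistent on the overlap since $q_n\equiv n+1$ on $q_n^{-1}(n+1)$, and keeps all generators in levels $\le n+1$. Because $q$ is additive and the new generators sit in level $n+1$, the pushout does not change the fibres over $1,\dots,n$ or their subspaces of decomposables, so (3) holds there. In level $n+1$ the new indecomposables form the open cone $\mathring C\,q_n^{-1}(n+1)\cong\mathring D^{2n}$, attached to the old (decomposable) fibre along $q_n^{-1}(n+1)\cong S^{2n-1}$, so $q_{n+1}^{-1}(n+1)\cong D^{2n}$ with decomposable part $q_n^{-1}(n+1)\cong S^{2n-1}$, i.e.\ the pair $(D^{2(n+1)-2},S^{2(n+1)-3})$: this is (3) for $k=n+1$.

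It remains to verify (4) for $n+1$, that $q_{n+1}^{-1}(n+2)\cong S^{2n+1}$. There all elements are again decomposable (no generator in level $n+2$), and writing $n+2$ as a sum of at least two positive integers automatically involves only parts $\le n+1$, so $q_{n+1}^{-1}(n+2)$ is the colimit, over the composition category of $\FM_2$, of the spaces $\FM_2(m)\times_{\prod\Sigma_{k_i}}\prod_i q_{n+1}^{-1}(k_i)$ with $m\ge 2$ and $\sum k_i=n+2$, whose building blocks are now known balls $D^{2k_i-2}$ with decomposable boundary $S^{2k_i-3}$. Thus the lemma reduces to a single homeomorphism statement about the colimit $X_\infty=\operatorname{colim}_nX_n$: that $q_\infty^{-1}(k)\cong D^{2k-2}$ with decomposable part $S^{2k-3}$; equivalently, the subspace of level-$k$ elements assembled from the lower balls by $\FM_2$-operations is the boundary sphere of the $k$-th ball.

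To prove this last statement I would use a functor $\Phi$ from reduced topological operads to $\FM_2$-algebras, set up so that (a) $\Phi(\FM_2)\cong\bigsqcup_n SP^n(D^2)$ as $\FM_2$-algebras -- an identification made by hand, presenting a multiset of points of $D^2$ as the root set of a monic polynomial and matching Fulton--MacPherson composition with an insertion-by-rescaling product -- and (b) $\Phi$ carries the tree cell structure of $W\mathcal O$ (recalled above from \cite{Boardman-Vogt}) to an explicit CW structure on $\Phi(\mathcal O)$ in which the decomposables of each level are the subcomplex built from the cells indexed by trees of height $\ge 2$. Applying $\Phi$ to the isomorphism $W\FM_2\cong\FM_2$ then makes $\bigsqcup_n SP^n(D^2)$ a cellular $\FM_2$-algebra; an induction on the level, using the already-proven homeomorphism types of the lower fibres and the contractibility of the symmetric products of $D^2$, identifies the decomposables of level $k$ with $\partial D^{2k-2}=S^{2k-3}$ and the $k$-th summand with $D^{2k-2}$, which closes the induction and identifies $X_\infty$ with the cellular cofibrant replacement $\bigsqcup_nSP^n(D^2)$ advertised after the statement. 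The main obstacle is exactly the construction and analysis of $\Phi$: checking that $\Phi(\FM_2)$ really is the disjoint union of the symmetric products of the disc with the expected $\FM_2$-action, and -- the actual crux -- that under $\Phi$ the decomposables of each level coincide with the boundary spheres, i.e.\ reconciling the abstract $E_2$-cell picture coming from $W\FM_2$ with the concrete ball/sphere geometry of $SP^{\bullet}(D^2)$. Once that is in hand, everything else (additivity of $q$, the pushout bookkeeping, the fact that (2) holds by construction) is routine.
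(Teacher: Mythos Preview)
Your outline is essentially the paper's proof. The inductive setup, the verification of (1)--(3), and the reduction of (4) to identifying the decomposables in the next level with a sphere are identical. The paper also pivots on the isomorphism $W\FM_2\cong\FM_2$ and on symmetric products of the disk, and it also acknowledges that this identification is the crux.

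Where the paper differs is in packaging. Instead of a single functor $\Phi$ from operads to $\FM_2$-algebras, it defines a level-wise functor $P_n$ from operads to spaces, $P_n(\mathcal O)=\mathcal O(n)/{\sim}$, where $\sim$ forgets the inner factors of a composition and the $\Sigma_n$-action; and instead of the full composition category it writes the level-$(n+2)$ fibre as a colimit over an explicit category $\mathcal D(n+2)$ of weighted trees. The chain of homeomorphisms it proves is
\[
q_{n+1}^{-1}(n+2)\;\cong\;\varinjlim_{\mathcal D(n+2)}F_{n+2}\;\cong\;P_{n+2}(W\FM_2)\;\cong\;P_{n+2}(\FM_2)\;\cong\;\partial\bigl({SP'}^{\,n+2}(D^2)\bigr),
\]
where ${SP'}^{\,k}(D^2)$ is the \emph{normalized} symmetric product (barycentre zero), and the last space is a sphere by an argument from Kallel--Salvatore. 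The map realizing $P_k(\FM_2)\cong\partial({SP'}^{\,k}(D^2))$ is simply the forgetful map to unordered configurations in $\mathbb R^2$ modulo translations and positive dilations; neither the roots-of-a-monic-polynomial identification nor an ``insertion-by-rescaling'' product is used, and indeed those do not line up cleanly with the Fulton--MacPherson composition or with the disk structure. If you try to push your $\Phi$ through, that is the place to be careful: the concrete homeomorphism you need is with the normalized $SP'$, not $SP$, and the compatibility with $\FM_2$-composition comes from the quotient relation defining $P_n$, not from polynomial multiplication.
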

\begin{proof}
We construct the sequence recursively. We start by defining
\[
X_1 = \FM_2(S^0) = \bigsqcup_k \frac{\FM_2(k)}{\Sigma_k}
\]
$ 1 $ is obviously satisfied, $ 3 $ holds because both $ \FM_2(0) $ and $ \FM_2(1) $ consist of a single point, and $ 4 $ is satisfied because $ \FM_2(2) $ is homeomorphic to the space of configurations of two antipodal points on $ S^1 $, which is $ \mathbb{RP}^2 \cong S^1 $.

We now assume that we have already constructed a $ \FM_2 $-algebra $ X_n $ satisfying properties $ 3 $, and $ 4 $ above, and we produce $ X_{n+1} $ satisfying properties $ 2 $, $ 3 $ and $ 4 $.
We define $ X_{n+1} $ by attaching a $ 2n $-dimensional disk to $ X_n $ along a homoemorphism between $ q_n^{-1}(n+1) $ and $ \partial(D^{2n}) = S^{2n-1} $. Explicitly, we let $ X_{n+1} $ the following pushout:
\begin{center}
\begin{tikzcd}
\FM_2(D^{2n}) \arrow{r}\drar[phantom, "\urcorner"] & X_{n+1} \\
\FM_2(S^{2n-1}) \arrow[hook]{u} \arrow{r}{\cong} & q_n^{-1}(n+1) \subseteq X_n \hspace{2.2cm} \ar{u}
\end{tikzcd}
\end{center}

We now prove that $ X_{n+1} $ satisfies the desired requirements. $ 2 $ holds by construction. $ 3 $ holds because $ q_{n+1}^{-1}(k) = q_n^{-1}(k) $ if $ k \leq n $ and because $ q_{n+1}^{-1}(n+1) $ is homeomorphic to a cone over $ S^{2n-1} $.

%In order to prove $ 4 $, we let $ G $ be the group of translations and positive dilations in $ \mathbb{R}^2 $ and we let $ \nu \colon \bigsqcup_k \FM^2(k) \times_{\Sigma_k} X_{n+1}^k \to X_{n+1} $ be the operadic action. Recall that the indecomposable $ k $-ary operations of $ \FM^2 $ are those arising from $ F(\mathbb{R}^2,k) $, the unordered configuration space. Thus, since $ \pi_{n+1}^{-1}(1) $ is a point,
%\[
%\pi_{n+1}^{-1}(n+2) = \bigcup_{2 \leq k \leq n+2} \nu \left( \frac{F(\mathbb{R}^2,k)}{G} \times_{\Sigma_n} \bigsqcup_{\sum_{i=1}^k m_i = n+2}X_{n+1}(m_i) \right).
%\]

Before discussing the proof of $ 4 $, we fix some notation.

First, we let $ G $ be the group of translations and positive dilations of $ \mathbb{R}^2 $.%and $ Y_k $ be as in the statement of Lemma \ref{lem:collapsable configurations}. From that lemma, it is enough to prove the following claim.\\
%\textbf{Claim.} There is a homotopy equivalence $ \pi^{-1}_{n+1}(n+2) \sim Y_{n+2} $.

We then define a functor $ P_n $ from the category of topological operads to topological spaces defined by
\[
P_n(\mathcal{O}) = \frac{\mathcal{O}(n)}{\sim}, \mbox{ where } r \circ (s_1,\dots,s_k) \sim r \circ (s_1',\dots,s_k'), \sigma.t \sim t
\]
for all $ r \in \mathcal{O}(k) $, $ s_i,s_i' \in \mathcal{O}(m_i) $, $ t \in \mathcal{O}(n) $ and $ \sigma \in \Sigma_n $.

We also take the topological spaces
\[
{SP'}^n(D^2) = \{[x_1,\dots,x_n] \in SP^n(D^2): \sum_{i=1}^n x_i = 0\}.
\]

Finally, we will use the subcategory $ \mathcal{D}(n) $ of the category of trees defined as follows. Objects are rooted trees with internal vertices of valence at least $ 3 $, at least $ 2 $ leaves and a positive integer weight $ w_{\ell} $ assigned to each leaf $ \ell $, such that $ \sum_{\ell \mbox{ \tiny leaf}} w_{\ell} = n $. We call these ``weighted trees of total weight $ n $''.
Morphisms in $ \mathcal{D}(n) $ are generated under composition by the following two classes of morphisms:
\begin{itemize}
\item We consider tree isomorphisms that preserve the root vertex and the weights (we do not require that they preserve the labels).
\item We consider collapses of internal edges of $ T $ (without changing the leaves weights and labels)
\item For each weighted tree $ T $ and internal vertex $ v \in V(T) $, let $ T_v $ be the subtree spanned by those vertices $ w $ that admit a directed path from $ w $ to $ v $. The leaves of the quotient tree $ T/T_v $ are assigned weights as follows: leaves of $ T $ that do not belong to $ T_v $ survive to $ T/T_v $ and retain their weight; $ T_v $ itself is collapsed into a new leaf with weight equal to the sum of the weights of the leaves of $ T_v $. We consider the quotient maps $ T \to T/T_v $.
\end{itemize}
To clarify this construction, we depict in Figure 1 the isomorphism classes of objects in $ \mathcal{D}(4) $ with the generating morphisms between them. We omit leaf labels and their permutations to keep the figure clean.
\begin{figure}[ht]
\caption{The category $ \mathcal{D}(4) $}
\begin{center}
\begin{tikzpicture}[scale=0.6, line cap=round,line join=round,>=triangle 45,x=1cm,y=1cm]
	\clip(-4.52,-14.8) rectangle (19.72,2.14);
	\draw [line width=1pt] (-1,1)-- (0,0);
	\draw [line width=1pt] (0,0)-- (-0.33,1);
	\draw [line width=1pt] (0.33,1)-- (0,0);
	\draw [line width=1pt] (1,1)-- (0,0);
	\draw [line width=1pt] (0,0)-- (0,-1);
	\draw [line width=1pt] (4,1)-- (5,0);
	\draw [line width=1pt] (6,1)-- (5,0);
	\draw [line width=1pt] (5,0)-- (5,-1);
	\draw [line width=1pt] (9,1)-- (10,0);
	\draw [line width=1pt] (10,0)-- (10,1);
	\draw [line width=1pt] (11,1)-- (10,0);
	\draw [line width=1pt] (10,0)-- (10,-1);
	\draw [line width=1pt] (14,1)-- (15,0);
	\draw [line width=1pt] (15,0)-- (16,1);
	\draw [line width=1pt] (15,0)-- (15,-1);
	\draw [line width=1pt] (-1,-4)-- (-0.33,-5);
	\draw [line width=1pt] (-0.33,-5)-- (0,-6);
	\draw [line width=1pt] (-0.33,-4)-- (-0.33,-5);
	\draw [line width=1pt] (-0.33,-5)-- (0.33,-4);
	\draw [line width=1pt] (1,-4)-- (0,-6);
	\draw [line width=1pt] (0,-6)-- (0,-7);
	\draw [line width=1pt] (4,-4)-- (4.5,-5);
	\draw [line width=1pt] (4.5,-5)-- (5,-4);
	\draw [line width=1pt] (4.5,-5)-- (5,-6);
	\draw [line width=1pt] (5,-6)-- (5,-7);
	\draw [line width=1pt] (5,-6)-- (6,-4);
	\draw [line width=1pt] (9,-4)-- (9.66,-5);
	\draw [line width=1pt] (9.66,-5)-- (9.66,-4);
	\draw [line width=1pt] (10,-6)-- (10.33,-4);
	\draw [line width=1pt] (9.66,-5)-- (10,-6);
	\draw [line width=1pt] (10,-6)-- (11,-4);
	\draw [line width=1pt] (10,-6)-- (10,-7);
	\draw [line width=1pt] (14,-4)-- (14.5,-5);
	\draw [line width=1pt] (14.5,-5)-- (15,-4);
	\draw [line width=1pt] (16,-4)-- (15,-6);
	\draw [line width=1pt] (15,-6)-- (14.5,-5);
	\draw [line width=1pt] (15,-6)-- (15,-7);
	\draw [line width=1pt] (4.5,-14)-- (4.5,-13);
	\draw [line width=1pt] (4.5,-13)-- (5,-10);
	\draw [line width=1pt] (4,-10)-- (3.5,-12);
	\draw [line width=1pt] (3,-10)-- (2.5,-11);
	\draw [line width=1pt] (2.5,-11)-- (2,-10);
	\draw [line width=1pt] (2.5,-11)-- (3.5,-12);
	\draw [line width=1pt] (3.5,-12)-- (4.5,-13);
	\draw [line width=1pt] (12.5,-14)-- (12.5,-13);
	\draw [line width=1pt] (12.5,-13)-- (13.5,-12);
	\draw [line width=1pt] (13.5,-12)-- (13,-11);
	\draw [line width=1pt] (13.5,-12)-- (14,-11);
	\draw [line width=1pt] (12.5,-13)-- (11.5,-12);
	\draw [line width=1pt] (11.5,-12)-- (11,-11);
	\draw [line width=1pt] (11.5,-12)-- (12,-11);
	\draw [-stealth,line width=1pt] (0.36,-3.6) -- (0.38,-1.2);
	\draw [-stealth,line width=1pt] (5.5,-3.6) -- (5.5,-1.2);
	\draw [-stealth,line width=1pt] (10.54,-3.6) -- (10.56,-1.2);
	\draw [-stealth,line width=1pt] (1.32,-3.6) -- (4.14,-1.2);
	\draw [-stealth,line width=1pt] (6.58,-3.6) -- (9.54,-1.2);
	\draw [-stealth,line width=1pt] (9.64,-3.6) -- (1.24,-1.2);
	\draw [-stealth,line width=1pt] (13.98,-3.6) -- (11.26,-1.2);
	\draw [-stealth,line width=1pt] (15.32,-3.6) -- (15.3,-1.2);
	\draw [-stealth,line width=1pt] (2.52,-9.4) -- (0.72,-7);
	\draw [-stealth,line width=1pt] (3.64,-9.4) -- (4,-7);
	\draw [-stealth,line width=1pt] (5.24,-9.4) -- (9,-7);
	\draw [-stealth,line width=1pt] (12.22,-10.3) -- (10.72,-7);
	\draw [-stealth,line width=1pt] (13.2,-10.3) -- (14.4,-7);
	\draw (-1.26,1.52) node[anchor=north west] {$1$};
	\draw (-0.58,1.52) node[anchor=north west] {$1$};
	\draw (0.02,1.52) node[anchor=north west] {$1$};
	\draw (0.7,1.5) node[anchor=north west] {$1$};
	\draw (3.7,1.54) node[anchor=north west] {$3$};
	\draw (5.68,1.54) node[anchor=north west] {$1$};
	\draw (8.64,1.52) node[anchor=north west] {$2$};
	\draw (9.66,1.52) node[anchor=north west] {$1$};
	\draw (10.72,1.52) node[anchor=north west] {$1$};
	\draw (13.68,1.52) node[anchor=north west] {$2$};
	\draw (15.7,1.52) node[anchor=north west] {$2$};
	\draw (-1.28,-3.52) node[anchor=north west] {$1$};
	\draw (-0.62,-3.52) node[anchor=north west] {$1$};
	\draw (0.08,-3.52) node[anchor=north west] {$1$};
	\draw (0.74,-3.52) node[anchor=north west] {$1$};
	\draw (3.74,-3.48) node[anchor=north west] {$2$};
	\draw (4.7,-3.48) node[anchor=north west] {$1$};
	\draw (5.74,-3.48) node[anchor=north west] {$1$};
	\draw (8.7,-3.48) node[anchor=north west] {$1$};
	\draw (9.42,-3.48) node[anchor=north west] {$1$};
	\draw (10.1,-3.48) node[anchor=north west] {$1$};
	\draw (10.76,-3.48) node[anchor=north west] {$1$};
	\draw (13.7,-3.58) node[anchor=north west] {$1$};
	\draw (14.7,-3.58) node[anchor=north west] {$1$};
	\draw (15.72,-3.58) node[anchor=north west] {$2$};
	\draw (1.7,-9.6) node[anchor=north west] {$1$};
	\draw (2.76,-9.6) node[anchor=north west] {$1$};
	\draw (3.78,-9.6) node[anchor=north west] {$1$};
	\draw (4.74,-9.6) node[anchor=north west] {$1$};
	\draw (10.74,-10.54) node[anchor=north west] {$1$};
	\draw (11.7,-10.54) node[anchor=north west] {$1$};
	\draw (12.74,-10.54) node[anchor=north west] {$1$};
	\draw (13.74,-10.54) node[anchor=north west] {$1$};
	\begin{scriptsize}
		\draw [fill=black] (0,0) circle (2.5pt);
		\draw [fill=black] (5,0) circle (2.5pt);
		\draw [fill=black] (10,0) circle (2.5pt);
		\draw [fill=black] (15,0) circle (2.5pt);
		\draw [fill=black] (0,-6) circle (2.5pt);
		\draw [fill=black] (-0.33,-5) circle (2.5pt);
		\draw [fill=black] (5,-6) circle (2.5pt);
		\draw [fill=black] (4.5,-5) circle (2.5pt);
		\draw [fill=black] (10,-6) circle (2.5pt);
		\draw [fill=black] (9.66,-5) circle (2.5pt);
		\draw [fill=black] (15,-6) circle (2.5pt);
		\draw [fill=black] (14.5,-5) circle (2.5pt);
		\draw [fill=black] (2.5,-11) circle (2.5pt);
		\draw [fill=black] (3.5,-12) circle (2.5pt);
		\draw [fill=black] (4.5,-13) circle (2.5pt);
		\draw [fill=black] (12.5,-13) circle (2.5pt);
		\draw [fill=black] (11.5,-12) circle (2.5pt);
		\draw [fill=black] (13.5,-12) circle (2.5pt);
	\end{scriptsize}
\end{tikzpicture}
\end{center}
\end{figure}

%Objects are pairs $ (T,S) $, where $ T \in \Tree(n) $, and $ S $ is a subset of the set of internal vertices of $ T $ covered by a leaf. Automorphisms of $ (T,S) $ in $ \mathcal{D}(n) $ are automorphisms of the tree $ T $ non necessarily preserving the leaf labels, such that 

As the proof is intricate, we split it in separate claims.

\textbf{Claim 1.} $ P_n(\FM_2) $ is homeomorphic to
\[
\partial({SP'}^n(D^2)) = \{ [x_1,\dots,x_n] \in {SP'}^n(D^2) \exists x_i \in \partial(D^2)\}.
\]
To prove this claim, we let $ f_n \colon \FM_2(n) \to \frac{SP^n(\mathbb{R}^2)}{G} $ be the map that merge a cloud of infinitesimally close points into a single point (in other words, $ f_n $ maps an element of $ \FM_2(n) $ to its underlying set of points). The fibers of $ f_n $ are exactly the $ \sim $-equivalent classes and its image is the complement of the thin diagonal $ \Delta $ inside $ \frac{SP^n(\mathbb{R}^2)}{G} $. Hence, it induces a homeomorphism $ \overline{f}_n \colon P_n(\FM_2) \to \frac{SP^n(\mathbb{R}^2) \setminus \Delta}{G} $. By applying a translation in $ G $, an element $ [x_1,\dots,x_n] \in SP^n(\mathbb{R}^2) $ can be transformed into a configuration such that $ \sum_{i=1}^n x_i = 0 $. Moreover, if $ [x_1,\dots,x_n] \notin \Delta $, then at least one point is different from $ 0 \in \mathbb{R}^2 $, thus we can further apply a positive dilation to obtain a configuration in $ \partial({SP'}^n(D^2)) $. This provides the desired homeomorphism.

\textbf{Claim 2.} $ P_n(\FM_2) \cong P_n(W\FM_2) $.\\
This claim follows from the fact, proved by the second author \cite{Salvatore:01}, that there is an isomorphism of topological operads $ \FM_2 \cong W\FM_2 $ and that $ P_n $ is a functor.

\textbf{Claim 3.} Assume that $ X_n $ satisfies properties $ (1), (2), (3) $. Let $ F_{n+1} $ be the functor from $ \mathcal{D}(n+1) $ to topological spaces defined as follows:
\begin{itemize}
\item On an object $ T \in Ob(\mathcal{D}(n+1)) $,
\[
F_{n+1}(T) = \prod_{v \in V(T)} \FM_2(r_v) \times \prod_{l \in L(T)} D^{2(w_\ell-1)}.
\]
In other words, $ F_{n+1}(T) $ is the configuration space of decorations of each internal vertex $ v $ of $ T $ with an element of $ \FM_2(r_v) $, and of each leaf $ \ell $ of $ T $ with a point in $ D^{2(w_\ell -1)} $. To correctly define it, we need a standard order of $ V(T) $. It can be achieved, for instance, by considering for each $ v \in V(T) $ the set of leaves over $ v $, listing the elements of these sets in increasing order, and ordering the resulting lists lexicographically.
\item Let $ \alpha $ be a tree isomorphism. We order the incoming edges of every internal vertex $ v \in V(T) $ as follows: we consider their vertices different from $ v $ and we order them according to the standard order of $ V(T) $. We order the incoming edges of $ \alpha(v) $ similarly. Acting on such edges, $ \alpha $ induces a permutation $ \sigma_{\alpha,v} \in \Sigma_{r_v} $. We let
\[
F_{n+1}(\alpha)(\{x_v\}_{v \in V(T)} \times \{y_\ell\}_{l \in L(T)}) = (\{x_{\alpha^{-1}(v)}.\sigma_{\alpha,v}\}_{v \in V(T')} \times \{y_{\alpha^{-1}(\ell)}\}_{\ell \in L(T')})
\]
\item Let $ e \in E(T) $ be an internal edge. $ F_{n+1} $ transforms the collapse of $ e $ into the morphism defined by applying operadic composition to the factors in $ F_{n+1}(T) $ corresponding to the vertices of $ e $.
\item Let $ v \in V(T) $ be and internal edge. Observe that $ V(T/T_v) = V(T) \setminus V(T_v) $ and that $ L(T/T_v) = L(T) \setminus L(T_v) \cup \{[T_v]\} $.
For $ \{x_v\}_{v \in V(T)} \times \{y_\ell\}_{\ell \in L(T)} \in F_{n+1}(T) $, we define $ y_{[T_v]} $ as follows. First, we apply the tree-wise operadic composition to $ \{x_v\}_{v \in V(T_v)} $ (see, for example, \cite[Section 5.6]{Loday-Vallette} for its definition). This yields an element of $ \FM_2(|L(T_v)|) $. We then apply this operation to $ \{y_{\ell}\}_{\ell \in L(T_v)} $ to obtain an element of $ \Dec_{\FM_2}(X_{\sum_{\ell \in L(T_v)} w_{\ell}}) $. We then identify it with a point in $ \partial(D^{2\sum_{\ell \in L(T_v)} w_{\ell}-2}) $ via the homeomorphisms $ \Dec_{\FM_2}(X_k) \cong S^{2k-3} $.
\end{itemize}
Then $ \varinjlim_{\mathcal{D}(n+1)}(F_{n+1}) \cong q_{n+1}^{-1}(X_n) $.\\
Before proving this claim, we consider the category $ \mathcal{D}'_k(m) $ defined analogously to $ \mathcal{D}(m) $, but we additionally admit trees with a single leaf and we allow only weights $ w_{\ell} \leq k $. Note that $ \mathcal{D}'_n(n+1) = \mathcal{D}(n+1) $.

$ F_m $ extends naturally to $ \mathcal{D}'_n(m) $ for all $ m $ with essentially the same definition (we still denote this functor $ F_m $ with an abuse of notation). We observe that $ \bigsqcup_m \varinjlim_{\mathcal{D}'_n(m)}(F_m) $ has a $ \FM_2 $-algebra structure defined as follows. For all $ x \in \FM_2(k) $, $ y_i \in \varinjlim_{\mathcal{D}'_n(m_i)}(F_{m_i}) $, represent each $ y_i $ by an element $ \tilde{y}_i \in F_{m_i}(T_i) $ for some $ T_i \in \mathcal{D}'_n(m_i) $. Consider the tree $ T $ obtained by grafting $ T_1,\dots, T_k $ onto the leaves of a $ k $-corolla, whose internal vertex is denoted $ \overline{v} $.
Since $ V(T) = \{ \overline{v} \} \sqcup \bigsqcup_{i=1}^k V(T_i) $ and $ L(T) = \bigsqcup_{i=1}^k L(T_i) $, the product $ x \times \prod_{i=1}^k y_i $ defines an element $ z \in F_{\sum_i m_i}(T) $. We define $ x(y_1,\dots,y_k) $ as the image of $ z $ inside $ \varinjlim_{\mathcal{D}'_n(\sum_i m_i)}(F_{\sum_i m_i}) $.
It can be checked directly from the universal property of colimits that this does not depend on the choice of representatives and that this is compatible with composition in $ \FM_2 $.

We now prove that this $ \FM_2 $-algebra is isomorphic to $ X_n $. This is strictly stronger than Claim 3. We proceed by induction on $ n $.
In the base case ($ n = 1 $) all weights are equal to $ 1 $. Disk factors corresponding to weight $ 1 $ consists of a single point. Therefore, for all $ T \in \mathcal{D}'_1(m) $, $ F_m(T) $ consists of the configuration space of decorations of internal vertices of $ T $ with an element of the Fulton--MacPherson operad of the correct arity. $ \varinjlim_{\mathcal{D}'_1(m)}(F_m) $ is thus isomorphic to $ \frac{FM^2(m)}{\Sigma_m} $, with universal maps $ u_T \colon F_m(T) \to \frac{FM^2(m)}{\Sigma_m} $ given by taking tree-wise compositions and passing to the quotient.

Thus we can now assume that $ n > 1 $ and that $ \bigsqcup_m \varinjlim_{\mathcal{D}'_{n-1}(m)}(F_{m}) \cong X_{n-1} $.
$ \mathcal{D}'_{n-1}(m) $ is a subcategory of $ \mathcal{D}'_n(m) $ and $ F_m $ agrees on these categories. Hence, there is a natural morphism
\[
\alpha_n \colon X_{n-1} \cong \bigsqcup_m \varinjlim_{\mathcal{D}'_{n-1}(m)}(F_{m}) \to \bigsqcup_m \varinjlim_{\mathcal{D}'_{n}(m)}(F_{m}).
\]
There is also a natural morphism
\[
\beta_n \colon \FM_2(D^{2n-2}) \to \bigsqcup_m \varinjlim_{\mathcal{D}'_{n}(m)}(F_{m})
\] defined by restricting $ F_m $ to the full subcategory $ \mathcal{D}''_n(m) $ of $ \mathcal{D}'_n(m) $ containing trees with all weight equal to $ n $.
The restrictions of $ \alpha_n $ and $ \beta_n $ to $ \FM_2(S^{2n-3}) $ are equal by construction.

To prove that $ \bigsqcup_m \varinjlim_{\mathcal{D}'_{n}(m)}(F_{m}) $ is the pushout of $ X_{n-1} $ and $ \FM_2(D^{2n-2}) $ along $ \FM_2(S^{2n-3}) $ (which is $ X_n $ by definition), we check that it satisfies the corresponding universal property.

Let $ Y $ be a $ \FM_2 $-algebra with maps $ \varphi \colon X_{n-1} \to Y $ and $ \psi \colon \FM_2(D^{2n-2}) \to Y $ agreeing on $ \FM_2(S^{2n-3}) $.
Then we can map $ D^{2k-2} $ with $ k < n $ to $ Y $ by first embedding $ D^{2k-2} $ into $ X_{n-1} = \bigsqcup_m \varinjlim_{\mathcal{D}'_{n-1}(m)} ( F_m ) $ via the tree with a single leaf of weight $ k $, and then mapping $ X_{n-1} $ to $ Y $ via $ \varphi $. We map $ D^{2n-2} $ to $ Y $ via $ \psi $. These functions map the decoration of leaves of trees in $ \mathcal{D}'_n(m) $ into $ Y $.

For all trees $ T \in \mathcal{D}_n(m) $, we define $ \chi_T \colon F_m(T) \to Y $ by first taking the tree-wise composition of the decorations of the internal vertices of $ T $, and then apply the resulting operation to the images of the decorations of its leaves in $ Y $.
It is straightforward to check that the maps $ \chi_T $ is a cocone over $ F $ and, consequently, induce a morphism $ \varinjlim{\chi} \colon \bigsqcup_m \varinjlim_{\mathcal{D}'_{n}(m)}(F_{m}) \to Y $. $ \varinjlim{\chi} \circ \alpha_n = \varphi $ and $ \varinjlim{\chi} \circ \beta_n = \psi $ by construction.
Let $ \chi' \colon \bigsqcup_m \varinjlim_{\mathcal{D}'_{n}(m)}(F_{m}) \to Y $ be another  $ \FM_2 $-algebra map such that $ \chi' \circ \alpha_n = \varphi $ and $ \chi' \circ \beta_n = \psi $. The last two identities imply that $ \chi $ and $ \chi' $ have the same restriction on objects $ T \in \bigsqcup_m \left(\mathcal{D}'_{n-1}(m) \cup \mathcal{D}''_n(m) \right) $. Compatibility with the action of $ \FM_2 $ implies that the restriction of $ \chi' $ to objects of $ \bigsqcup_m \mathcal{D}'_n(m) $ preserves grafting of weighted trees on corollas. Since every weighted tree in $ \bigsqcup_m \left( \bigsqcup_m \mathcal{D}'_n(m) \right) $ can be obtained from objects of $ 
\mathcal{D}'_{n-1}(m) \cup \mathcal{D}''_n(m) $ via such grafting operations, $ \chi' = \varinjlim \chi_T  = \chi $.

\textbf{Claim 4.} If $ X_n $ satisfies conditions $ (1) $, $(2) $ and $(3) $, then $ P_{n+1}(W\FM_2) \cong \varinjlim_{\mathcal{D}(n+1)} F_{n+1} $.\\
We prove this claim by induction on $ n $, the base case ($ n = 1 $) being trivial.

We assume that we have already constructed homeomorphisms $ \varphi_k \colon P_{k}(W\FM_2) \to \varinjlim_{\mathcal{D}(k)} F_{k} $ for all $ k \leq n $ and we define $ \varphi_{n+1} \colon P_{n+1}(W\FM_2) \to \varinjlim_{\mathcal{D}(n+1)} F_{n+1} $ as follows. Let $ x \in W\FM_2(n+1) $ be the element corresponding to a rooted tree $ T \in \Tree(n+1) $, with internal vertices $ v $ decorated with $ x_v \in \FM_2(r_v) $ and internal edges $ e $ of length $ t_e $. We let $ \overline{v} \in V(T) $ be the internal vertex closest to the root and  $ e_1,\dots,e_r $ be the ingoing edges of $ \overline{v} $. We let $ T_i $ be the sub-tree of $ T $ over $ e_i $ (with the same leaf weights and vertex decorations as $ T $). Then $ T $ is obtained by grafting $ T_1,\dots, T_r $ onto an $ r $-corolla (with internal vertex $ \overline{v}$), up to a permutation $ \sigma $ of the leaf labels. Let $ x|_{T_i} $ be the operation of $ W\FM_2 $ supported on $ T_i $, with vertices decoration and edge length borrowed from $ x $. Then $ \varphi_{|L(T_i)|}(x_i) \in \varinjlim_{\mathcal{D}(|L(T_i)|)} F_{|L(T_i)|} \cong q_{|L(t_i)|}^{-1}(X_{|L(T_i)|-1}) \cong S^{2|L(T_i)|-3} $. For all $ 0 \leq \lambda \leq 1 $, we can rescale it to a factor $ \lambda $ to obtain a point of the disk $ \lambda \varphi_{|L(T_i)|}(x|_{T_i}) \in D^{2|L(T_i)|-2} $. Let $ C_T \in \mathcal{D}_{n+1}(n) $ be the $ r $-corolla with weights equal to $ |L(T_1)|, \dots, |L(T_r)| $. We define $ \tilde{\varphi}_{n+1}(x) $ as the element of $ F_{n+1}(C_T) $ given by
\[
 x_{\overline{v}} \times ((1-t_{e_1})\varphi_{|L(T_1)|}(x|_{T_1}),\dots,(1-t_{e_r})\varphi_{|L(T_r)|}(x|_{T_r})).
\]
Let $ \overline{x} $ be the image of $ x $ in the quotient space $ P_{n+1}(WFM^2) = \frac{WFM^2(n+1)}{\sim} $. We then define $ \varphi_{n+1}(\overline{x}) $ as the image of $ \tilde{\varphi}_{n+1}(x) $ inside $ \varinjlim_{\mathcal{D}(n+1)} F_{n+1} $.

In order to show that $ \varphi_{n+1} $ is well-defined, we need to check some properties:
\begin{itemize}
\item $ \Sigma_{n+1} $ acts on $ W\FM_2 $ by permuting leaf labels. As the construction of $ \varphi_{n+1} $ does not depend on them, this map must be $ \Sigma_{n+1} $-invariant.
\item We need to show that, if $ t_{e_i} = 0 $ for some $ i $ and $ y $ is obtained from $ x $ by collapsing the edge $ e_i $ of $ T $ and replacing the decorations of $ x $ at the vertices of $ e_i $ with their operadic composition, then $ \varphi_{n+1}(x) = \varphi_{n+1}(y) $. Under that assumption, $ (1-t_{e_i})\varphi_{|L(T_i)|}(x_i) = \varphi_{|L(T_i)|}(x_i) \in S^{2|L(T_i)|-3} $. Let $ u_i $ be the vertex of $ e_i $ different from $ \overline{v} $. Let $ T' $ be the weighted tree (with the two internal vertices $ u_i $ and $ \overline{v} $) obtained by collapsing the $ T_j $s with $ j \not= i $ and the subtrees $ S_1, \dots, S_s $ above $ u_i $.
By the recursive definition of $ \varphi_k $, we see that $ \tilde{\varphi}_{n+1}(x) $ is the image in $ F_{n+1}(C_T) $ of
\[
(x_{u_i},x_{\overline{v}}) \times (\varphi_{|L(S_1)|}(x|_{S_1}),\dots,\varphi_{|L(S_s)|}(x|_{S_s}),\dots, \varphi_{|L(T_r)|}(x|_{T_r})) \in F_{n+1}(T')
\]
by the map induced by collapsing $ e_i $. As this map is defined from the operadic composition, this is exactly $ \varphi_{n+1}(y) $.
\item If $ t_e = 0 $ for some $ e \in \bigcup_i E(T_i) $, then $ \varphi_{n+1}(x) = \varphi_{n+1}(y) $ by our inductive assumption.
\item If $ t_{e_i} = 1 $ for some $ i $, then $ (1-t_{e_i}) \varphi_{|L(T_i)|}(x|_{T_i}) $ is the center of $ D^{2|L(T_i)|-2} $ and does not depend on the part of $ x $ above $ e_i $.
\item If $ t_e = 1 $ for some $ e \in \bigcup_i E(T_i) $, then $ \varphi_{n+1}(x) $ does not depend on the part of $ x $ above $ e $ by our inductive hypothesis.
\end{itemize}
Thus $ \varphi_{n+1} $ is well-defined.

We now explicitly construct the inverse of $ \varphi_{n+1} $. For all $ T \in \mathcal{D}_n(n+1) $ and $ x = (\{x_v\}_v \times \{y_\ell\}_\ell) \in F_{n+1}(T) $, we take the tree-wise composition of all the decorations of the internal vertices of $ T $ to obtain an operation of $ z \in \FM_2(|L(T)|) $. We then construct $ \tilde{\psi}_T \colon F_{n+1}(T) \to P_n(W\FM_2) $ by starting with a corolla with an internal vertex decorated with $ z $, onto which we graft the decorated rooted trees $ \varphi^{-1}_{|w_{\ell}|}(y_{\ell}) $ indexed by $ L(T) $. This provides an element of $ P_{n+1}(W\FM_2) $. It is easily checked that $ \tilde{\psi} \colon F_{n+1} \to P_{n+1}(W\FM_2) $ is a cocone. Therefore, it induces a morphism $ \psi \colon \varinjlim_{\mathcal{D}(n+1)} F_{n+1} \to P_{n+1}(W\FM_2) $.
$ \psi \circ \varphi_{n+1} = \id_{P_{n+1}(W\FM_2)} $ by construction.
We observe that the weighted trees with only one internal vertex span a cofinal subcategory of $ \mathcal{D}_n(n+1) $. Therefore, to prove that $ \varphi_{n+1} \circ \psi = \id_{\varinjlim_{\mathcal{D}(n+1)} F_{n+1}} $, it is enough to check that $ \tilde{\varphi}_{n+1} \circ \psi_T = \id_{F_{n+1}(T)} $ for these weighted trees. This is true by definition.

\textbf{Claim 5.} If $ X_{n+1} $ satisfies conditions $ (1) $, $(2) $ and $ (3) $, then it also satisfies condition $ (4) $.\\
To prove this statement, we combine all the previous claims to obtain a chain of homeomorphisms
\[
\partial({SP'}^{n+1}(D^2)) \cong P_{n+2}(\FM_2) \cong P_{n+2}(W\FM_2) \cong \varinjlim_{\mathcal{D}_n(n+2)}(F_{n+2}) \cong \pi_{n+2}^{-1}(X_{n+1}).
\]
The spaces $ \partial({SP'}^{n+1}(D^2)) $ and, consequently, $ \pi_{n+2}^{-1}(X_{n+1}) $ are homeomorphic to a sphere of the correct dimension by a straightforward modification of \cite[Lemma 5]{Kallel-Salvatore}.
\end{proof}

A consequence of the proof of this technical lemma is the following:
\begin{corollary}
$ \bigsqcup_n SP^n(D^2) $ has the structure of a cofibrant $ \FM_2 $-algebra.
\end{corollary}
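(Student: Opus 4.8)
The plan is to reread the construction carried out in the proof of Lemma~\ref{lem:cellular replacement} and to recognise the underlying space of the cofibrant $\FM_2$-algebra it produces as $\bigsqcup_n SP^n(D^2)$.

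In that proof one builds a sequence $\{X_n\}$ of $\FM_2$-algebras; its colimit $X_\infty = \varinjlim_n X_n$ is cofibrant, being obtained from the free algebra $X_1 = \FM_2(S^0)$ by successively attaching $E_2$-cells. Claims~1--5 identify, for every $k$, the weight-$k$ stratum $q_\infty^{-1}(k)$ with the cone on $P_k(\FM_2)\cong\partial({SP'}^{k}(D^2))$, hence with ${SP'}^{k}(D^2)$ itself, a closed ball of dimension $2k-2$. Now ${SP'}^{k}(D^2)$ and $SP^{k-1}(D^2)$ are both closed balls of dimension $2(k-1)$ --- the latter because $SP^m(D^2)$ is a $2m$-ball, by the same straightforward modification of \cite[Lemma~5]{Kallel-Salvatore} that is used at the end of the proof of the lemma. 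Fixing homeomorphisms $q_\infty^{-1}(k)\cong SP^{k-1}(D^2)$ for all $k$ and assembling them --- reading the weight-$0$ point of $X_\infty$ as $SP^0(D^2)$, or discarding it if one prefers to work non-unitally --- one gets a homeomorphism from the underlying space of $X_\infty$ to $\bigsqcup_n SP^n(D^2)$. Transporting the $\FM_2$-action of $X_\infty$ across this homeomorphism makes $\bigsqcup_n SP^n(D^2)$ into an $\FM_2$-algebra isomorphic to $X_\infty$, hence cofibrant.

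Equivalently, and this is the functorial formulation alluded to after Claim~2: the recipe of the lemma's proof is a functor on reduced topological operads; it sends $W\FM_2$ to a cellular cofibrant $\FM_2$-algebra by Claims~3--5, and it sends $\FM_2$ to (the space) $\bigsqcup_n SP^n(D^2)$ by the identifications above, and since $W\FM_2\cong\FM_2$ as topological operads \cite{Salvatore:01} the two outputs agree. The step requiring most care is that these identifications of strata be chosen compatibly with the operad action, so that transport of structure is legitimate; this is routine --- a homeomorphism of underlying spaces intertwining two $\FM_2$-actions is by definition an isomorphism of $\FM_2$-algebras, along which cofibrancy passes --- and beyond the mild bookkeeping around the weight-$0$ unit there is no genuine obstacle not already handled inside the proof of Lemma~\ref{lem:cellular replacement}.
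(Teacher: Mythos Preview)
Your argument is correct and is exactly the unpacking the paper intends: the corollary is stated with no separate proof, only the remark that it is ``a consequence of the proof of this technical lemma,'' and you have supplied that consequence by transporting the cofibrant structure of $X_\infty$ along the componentwise ball identifications coming from property~(3) and Claims~1--5.

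One small comment on your last paragraph: you do \emph{not} need the stratum homeomorphisms to be ``compatible with the operad action,'' because there is no preexisting $\FM_2$-action on $\bigsqcup_n SP^n(D^2)$ to be compatible with. You are simply pushing the action of $X_\infty$ forward along an arbitrary homeomorphism of underlying spaces; any homeomorphism will do, and cofibrancy is preserved automatically. Relatedly, the index shift you noticed is genuine --- the weight-$k$ piece of $X_\infty$ is a $(2k-2)$-ball, matching $SP^{k-1}(D^2)$ rather than $SP^k(D^2)$ --- so the resulting $E_2$-multiplication sends $SP^a\times SP^b$ into $SP^{a+b+1}$. This does not affect the corollary as stated, but it is worth being explicit about if you want the stronger claim (made in the introduction) that this is a cofibrant replacement of $\mathbb{N}$ with its usual additive grading.
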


\begin{theorem} \label{thm:extendibility E2}
Let $ X_{\infty} = \varinjlim_n X_n $. There is an uncountable set of non-homotopic $ E_2 $-maps $ X_{\infty} \to \bigsqcup_n BU(n) $.
\end{theorem}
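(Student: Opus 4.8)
The plan is to analyse the derived mapping space $\mathrm{Map}_{E_2}(X_\infty, Y)$, where $Y := \bigsqcup_n BU(n)$ carries the $E_\infty$-structure (restricted to $E_2$) given by Whitney sum of bundles, so that $\pi_0 Y = \N$. By Bott periodicity together with the stability $\pi_k(U(m)) \cong \pi_k(U)$ for $k \le 2m-1$ one has $\pi_k(BU(m)) \cong \pi_k(BU)$ for $k \le 2m$, hence $\pi_k(BU(m))$ is $\Z$ for $k$ even with $0 < k \le 2m$ and $0$ for $k$ odd with $k \le 2m$. Since, by Lemma~\ref{lem:cellular replacement}, $X_\infty = \varinjlim_n X_n$ is a sequential colimit of cellular cofibrations of cofibrant $\FM_2$-algebras, $\mathrm{Map}_{E_2}(X_\infty, Y) = \varprojlim_n \mathrm{Map}_{E_2}(X_n, Y)$ is the (homotopy) inverse limit of a tower of fibrations, and it suffices to show that $\varprojlim_n \pi_0 \mathrm{Map}_{E_2}(X_n, Y)$ is uncountable, since the natural map from $\pi_0$ of this homotopy limit onto it is surjective.

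First I would describe the tower. By Lemma~\ref{lem:cellular replacement}(2), $X_{n+1}$ is the pushout of $\FM_2(D^{2n}) \hookleftarrow \FM_2(S^{2n-1}) \to X_n$, the right-hand map being the $E_2$-map adjoint to the inclusion $S^{2n-1} \cong q_n^{-1}(n+1) \hookrightarrow X_n$ (a homeomorphism onto its image by Lemma~\ref{lem:cellular replacement}(4)). Applying $\mathrm{Map}_{E_2}(-,Y)$ and the free--forgetful adjunction $\mathrm{Map}_{E_2}(\FM_2(Z), Y) \cong \mathrm{Map}(Z, Y)$ turns this pushout into a pullback, realising $\mathrm{Map}_{E_2}(X_{n+1}, Y) \to \mathrm{Map}_{E_2}(X_n, Y)$ as the pullback of the restriction fibration $\mathrm{Map}(D^{2n}, Y) \to \mathrm{Map}(S^{2n-1}, Y)$ along $g \mapsto \bigl(S^{2n-1} \cong q_n^{-1}(n+1) \hookrightarrow X_n \xrightarrow{g} Y\bigr)$; its homotopy fibre over $g$ is the space of extensions of that boundary map over $D^{2n}$.

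Next I would restrict to the relevant components. Let $M_n \subseteq \mathrm{Map}_{E_2}(X_n, Y)$ be the union of components on which the composite with $Y \to \pi_0 Y = \N$ equals the structure map $q_n$; these form a subtower, because an extension of such a map over the connected generator $D^{2n}$ automatically lies in $M_{n+1}$. For $g \in M_n$ the boundary map lands in $BU(n+1)$, and since $\pi_{2n-1}(BU(n+1)) = 0$ (odd degree, stable range) it is null-homotopic, so $M_{n+1} \to M_n$ is surjective with fibre over $g$ homotopy equivalent to $\Omega^{2n} BU(n+1)$, whose $\pi_0$ is $\pi_{2n}(BU(n+1)) \cong \Z$ and whose $\pi_1$ is $\pi_{2n+1}(BU(n+1)) = 0$. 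An induction on $n$ then shows that every component of $M_n$ is simply connected: $M_1 = \mathrm{Map}_{E_2}(X_1, Y) \cong \mathrm{Map}(Z_0, Y)$ with $Z_0$ a finite discrete space (by Lemma~\ref{lem:cellular replacement}(1) and the adjunction), so its components are finite products of the simply connected spaces $BU(k)$, while the inductive step is the long exact sequence of the fibration $\Omega^{2n} BU(n+1) \to M_{n+1} \to M_n$, whose fibre and base are $1$-connected. Consequently every connecting map $\pi_1 M_n \to \pi_0(\text{fibre})$ vanishes, so $\pi_0 M_{n+1} \to \pi_0 M_n$ is a surjection all of whose fibres are in bijection with $\Z$.

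Finally I would conclude. The inverse limit of a tower of surjections of sets whose transition fibres each contain at least two elements contains a copy of $\{0,1\}^{\N}$: choose, for each finite binary word $\sigma$ of length $n$, an element $a_\sigma \in \pi_0 M_n$ with $a_{\sigma 0} \neq a_{\sigma 1}$ both mapping to $a_\sigma$, and read off a distinct compatible thread from each infinite binary sequence. Hence $\varprojlim_n \pi_0 M_n$ is uncountable, and pulling these threads back through the surjection $\pi_0 \mathrm{Map}_{E_2}(X_\infty, Y) \twoheadrightarrow \varprojlim_n \pi_0 M_n$ produces uncountably many homotopy classes of $E_2$-maps $X_\infty \to \bigsqcup_n BU(n)$; checking analogously that $\varprojlim^1_n \pi_1 M_n = 0$ shows this surjection is moreover injective on these components. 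I expect the crux — and the reason the explicit cellular model of Lemma~\ref{lem:cellular replacement} is indispensable — to be the control of $\pi_1$ of the mapping spaces $M_n$, which is what guarantees that the $\Z$ worth of choices available at every stage genuinely survives to the colimit rather than being collapsed by connecting homomorphisms; this control comes exactly from the vanishing of the odd stable homotopy of $BU$, applied in the range determined by the even dimensions $2n$ of the $E_2$-cells.
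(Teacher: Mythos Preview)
Your argument is correct and follows essentially the same strategy as the paper's: climb the cellular filtration $X_1 \subset X_2 \subset \cdots$, using that the $E_2$-cell attached at stage $n$ is even-dimensional and lands in component $n+1$, so that by Bott periodicity in the stable range the obstruction group $\pi_{2n-1}(BU(n+1))$ vanishes while the set of extension choices is $\pi_{2n}(BU(n+1)) \cong \Z$, and then conclude that the total set of choices is $\Z^{\N}$. The paper's version is terser and simply asserts that ``passing to the limit'' yields a bijection with $\Z^{\N}$; you instead carry this passage out carefully, verifying by induction that each $M_n$ has simply connected components (so the connecting homomorphisms $\pi_1 M_n \to \pi_0(\text{fibre})$ vanish and no $\Z$ worth of choices is ever collapsed) and that $\varprojlim^1 \pi_1 M_n = 0$. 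This extra care is warranted and in fact fills a point the paper leaves implicit.
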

\begin{proof}
Let $ q_{\infty} = \varinjlim_n q_n \colon X_{\infty} \to \mathbb{N} $ and $ X_{\infty,n} = q_{\infty}^{-1}(n) $. Let $ \alpha \colon X_{\infty} \to \bigsqcup_n BU(n) $ be an $ E_2 $-map.	
		
We let $ \alpha_n $ be the restriction of $ \alpha $ to $ X_n $. There is only one possible homotopy class for $ \alpha_1 $ because $ BU(1) $ is connected.

Assume that $ \alpha_n $ is already defined. Since $ X_{n+1} $ is obtained by attaching an $ E_2 $-cell of dimension $ 2n-2 $ to $ X_n $, we can extend $ \alpha_n $ to a map $ \alpha_{n+1} \colon X_{n+1} \to \bigsqcup_m BU(m) $ preserving components if and only if $ \pi_{2n-3}(BU(n)) $ is trivial. $ \pi_{2n-3}(BU(n)) \cong \pi_{2n-4}(U(n)) \cong \pi_{2n-4}(U(\infty)) = 0 $ by Bott periodicity, as we are in the stable range. Therefore, an extension $ \alpha_{n+1}$ always exists.

The possible homotopy classes of such extensions are classified by $ \pi_{2n}(BU(n)) $ which, by the same argument, is isomorphic to $ \pi_{2n-3}(BU(\infty)) \cong \mathbb{Z} $.

Passing to the limit, we see that the set of homotopy classes of maps $ \alpha $ is in bijection with $ \mathbb{Z}^{\mathbb{N}} $, which is uncountable.
\end{proof}

Let $ q_{\infty} = \varinjlim_n q_n \colon X_{\infty} \to \mathbb{N} $ and $ X_{\infty,n} = q_{\infty}^{-1}(n) $.
Then $ X_{\infty} = \bigsqcup_{n \in \mathbb{N}} X_{\infty,n} $ and each $ X_{\infty,n} $ is contractible. Assume that a $ E_2 $-map $ \alpha = \bigsqcup_n \alpha_n \colon \bigsqcup_n X_{\infty,n} \to \bigsqcup_n BU(n) $ is given and construct the pullback $ Y $ with the standard map $ DBU(1) \to \bigsqcup_n BU(n) $:
\begin{center}
	\begin{tikzcd}
		Y_{\alpha} \arrow{r} \arrow{d} & D(BU(1)) \arrow{d} \\
		X_{\infty} \arrow{r}{\alpha} & \bigsqcup_n BU(n)
	\end{tikzcd}
\end{center}

As both $ \alpha $ and the right vertical map are $ E_2 $-maps, there is an induced $ E_2 $-action on $ Y_{\alpha} $. Moreover, since $ X_{\infty} $ is cofibrant with contractible components, $ Y_{\alpha} $ is homotopy equivalent to $ \bigsqcup_n F_n $. We conjecture that, if two such maps $ \alpha $ and $ \alpha' $ are non-homotopic as $ E_2 $-morphisms, then the two induced $ E_2 $-actions on $ Y_{\alpha} $ and $ Y_{\alpha'} $ must be non-homotopic as $ E_2 $-maps over $ D(BU(1)) $. If this is true, then there is an inclusion from the set of homotopy classes of $ E_2 $-maps $ X_{\infty} \to \bigsqcup_n BU(n) $ preserving components into the set of equivalence classes of $ E_2 $-structures on $ \bigsqcup_n F_n $ compatible with the $ E_2 $-structure on $ D(BU(1)) $.

\textbf{Conjecture.} There are uncountably many $ E_2 $-structures on $ \bigsqcup_n F_n $ such that the inclusion $ \bigsqcup_n F_n \to D(BU(1)) $ is a homotopy morphism between $ E_2 $-algebras, up to homotopies of $ E_2 $-maps commuting with the morphisms $ \bigsqcup_n F_n \to D(BU(1)) $.

\section{Calculations for the Atiyah $ E_3 $-structure on $ D(BU(1) $}

On the mod $ 2 $ homology of $ E_3 $-algebras there is an action of Kudo--Araki--Dyer--Lashof operations up to order $ 2 $. We perform some partial calculations of this action to show that, even at the level of homology, the two considered $ E_3 $-actions are different.

To avoid ambiguity, in $ H_*(D(BU(1)); \mathbb{F}_2) $, we let $ Q_i $ and $ \tilde{Q}_i $ be the Dyer-Lashof operations (with lower indices) for the classical and the Atiyah $ E_3 $-structure, respectively. Moreover, we let $ * $ be the Pontrjagin product in homology.

We start with a relatively simple calculation that is enough to show that unstably the two structures are different.
\begin{lemma} \label{lem:identity unstable}
In $ H_*(D(BU(1)); \mathbb{F}_2) $, $ \tilde{Q}_2([1]) = Q_2([1]) + (c_1 \odot 1_1)^\vee $.
\end{lemma}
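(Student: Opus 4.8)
The plan is to realize both $\tilde{Q}_2([1])$ and $Q_2([1])$ as images of the fundamental class of $\RP^2=\FM_3(2)/\Sigma_2$ under two explicit maps into $D_2(BU(1))$, and then to distinguish the two classes inside $H_2(D_2(BU(1));\Ftwo)$ by pairing with cohomology. First I would unwind the operations: since $[1]$ lies in $H_0$ of the connected space $D_1(BU(1))=BU(1)$, it is represented by the unit $\ast$ of the topological monoid $BU(1)\cong SP^\infty(\CP^1)$, so $\tilde{Q}_2([1])$ is the image of $[\RP^2]\in H_2(\FM_3(2)/\Sigma_2;\Ftwo)$ under the map $\tilde{\theta}\colon\RP^2\to D_2(BU(1))$ induced on $\Sigma_2$-quotients by $\nu_{2:1,1}$ restricted to $\{\ast\}\times\{\ast\}$. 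Unravelling $\nu_{2:1,1}$ and using $\ast\cdot y=y$, one finds $\tilde{\theta}([p])=[\iota(p);x_{1,2}(p),x_{2,1}(p)]$, where $\iota\colon\FM_3(2)\hookrightarrow\FM_\infty(2)$ is the inclusion and we identify $\FM_3(2)=S^2$ via $p\mapsto x_{1,2}(p)$. Likewise $Q_2([1])=\theta_\ast[\RP^2]$ where $\theta([p])=[\iota(p);\ast,\ast]$ is the corresponding map for the classical ($E_\infty$-restricted) action, and $\theta$ factors through the standard inclusion $\RP^2\hookrightarrow B\Sigma_2\hookrightarrow D_2(BU(1))$.

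Next I would set up the detection. A standard computation shows that $H_2(D_2(BU(1));\Ftwo)$ is two-dimensional with basis $\{Q_2([1]),(c_1\odot 1_1)^\vee\}$, and that inside $H^2(D_2(BU(1));\Ftwo)$ the class $t^2$ — the square of the pullback $t$ of the generator of $H^1(B\Sigma_2)$ along $D_2(BU(1))\to B\Sigma_2$ — pairs with this basis by $\langle t^2,Q_2([1])\rangle=1$ and $\langle t^2,(c_1\odot 1_1)^\vee\rangle=0$; thus $\ker(\langle t^2,-\rangle)$ is the line spanned by $(c_1\odot 1_1)^\vee$. Write $\eta$ for the rank-two complex bundle classified by the canonical map $D_2(BU(1))\to BU(2)$, so that $c_1\odot 1_1 = c_1(\eta)\bmod 2$. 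Both $\tilde{\theta}$ and $\theta$ become the standard inclusion $\RP^2\hookrightarrow\RP^\infty=B\Sigma_2$ after composition with $D_2(BU(1))\to B\Sigma_2$ (the $\FM_\infty$-coordinate of $\nu_{2:1,1}$ is literally $\iota(p)$), hence $\tilde{\theta}^\ast t=\theta^\ast t$ and $\langle t^2,\tilde{Q}_2([1])-Q_2([1])\rangle=0$, so $\tilde{Q}_2([1])-Q_2([1])$ is either $0$ or $(c_1\odot 1_1)^\vee$; and it equals $(c_1\odot 1_1)^\vee$ precisely when $\langle c_1(\tilde{\theta}^\ast\eta)-c_1(\theta^\ast\eta),[\RP^2]\rangle\ne 0$ mod $2$, i.e.\ when the rank-two bundles $\tilde{\theta}^\ast\eta$ and $\theta^\ast\eta$ over $\RP^2$ are non-isomorphic.

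The crux is to identify these two bundles. For the classical map, $\theta^\ast\eta$ is the restriction to $\RP^2$ of the permutation bundle $E\Sigma_2\times_{\Sigma_2}\C^2$ over $B\Sigma_2$, which splits as $\underline{\C}$ plus the complexification of the tautological real line bundle $\lambda$; hence $c_1(\theta^\ast\eta)\equiv w_1(\lambda)^2\not\equiv 0$ on $\RP^2$. For the Atiyah map, the fibre of $\tilde{\theta}^\ast\eta$ over $[p]\in\RP^2=S^2/\!\pm$ is $\gamma_{x_{1,2}(p)}\oplus\gamma_{x_{2,1}(p)}$ with $x_{2,1}(p)=-x_{1,2}(p)$; under $SP^1(\CP^1)\hookrightarrow SP^\infty(\CP^1)=BU(1)$, the tautological line $\gamma_v$ over a direction $v\in\CP^1$ is the line $\C\,\ell_v$ of linear forms on $\C^2$ vanishing at $v$, which lies in the \emph{fixed} two-dimensional space $H^0(\CP^1,\mathcal{O}(1))\cong(\C^2)^\ast$. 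For antipodal directions $v$ and $-v$ the forms $\ell_v$ and $\ell_{-v}$ are linearly independent, so $\gamma_{x_{1,2}(p)}\oplus\gamma_{x_{2,1}(p)}=(\C^2)^\ast$ for every $p$; therefore $(u,w)\mapsto u+w$ exhibits $\tilde{\theta}^\ast\eta$ as a rank-two subbundle of the trivial bundle $\RP^2\times(\C^2)^\ast$ filling it fibrewise, so $\tilde{\theta}^\ast\eta$ is trivial and $c_1(\tilde{\theta}^\ast\eta)=0$. Since $w_1(\lambda)^2\ne 0$ on $[\RP^2]$, the two bundles are non-isomorphic, and we conclude $\tilde{Q}_2([1])=Q_2([1])+(c_1\odot 1_1)^\vee$.

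The step I expect to be the main obstacle is precisely this bundle identification: one has to determine correctly the restriction of the tautological line bundle of $BU(1)=SP^\infty(\CP^1)$ to $SP^1(\CP^1)$, keep track of the antipodal symmetry $x_{2,1}=-x_{1,2}$, and remember that the detecting class $c_1\odot 1_1$ — unlike its fibrewise restriction $c_1^{(1)}+c_1^{(2)}$ to $(\CP^\infty)^2$ — pulls back nontrivially from $B\Sigma_2$; this is exactly why the classical operation $Q_2([1])$ is not simply the dual of $t^2$ and why the Atiyah operation differs from it by $(c_1\odot 1_1)^\vee$.
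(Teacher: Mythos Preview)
Your argument is essentially correct and, for the key step, takes a genuinely different route from the paper.  Both you and the paper first reduce to showing that the difference $\tilde Q_2([1])-Q_2([1])$ is a multiple of $(c\odot 1_1)^\vee$, by projecting to $B\Sigma_2$ (the paper phrases this via $p\colon D(BU(1))\to D(\{*\})$, which on the $2$-component is exactly your map).  For the determination of the coefficient, however, the methods diverge: the paper uses the geometric cohomology framework of Friedman--Medina-Mardones--Sinha, representing $c\odot 1_1$ by an explicit codimension-$2$ submanifold and counting its intersection with $\widetilde{\Orb}_{(2)}\cong\RP^2$; you instead pull back the tautological rank-$2$ bundle $\eta$ and compare $c_1(\tilde\theta^*\eta)$ with $c_1(\theta^*\eta)$.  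Your approach is more elementary---no geometric cochains are needed---and it has pleasant conceptual content: $\tilde\theta^*\eta$ is trivial precisely because antipodal points on $S^2\cong\CP^1$ give complementary lines in $\C^2$, which is the (trivially true) $n=2$ case of the weak Atiyah--Sutcliffe conjecture.

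One correction is in order.  Your assertion ``$c_1\odot 1_1=c_1(\eta)\bmod 2$'' is not quite right: restricting to $B\Sigma_2\subset D_2(BU(1))$ (labels at the basepoint), the bundle $\eta$ is the complexified regular representation $\underline{\C}\oplus(\mathrm{sign})_{\C}$, and $c_1((\mathrm{sign})_{\C})=\beta(t)\equiv t^2\bmod 2$, whereas the transfer class $c\odot 1_1$ restricts to $0$ on $B\Sigma_2$.  Thus $c_1(\eta)=(c\odot 1_1)+\gamma_1^2$.  This does \emph{not} damage your argument: since you have already shown that $\tilde Q_2([1])-Q_2([1])$ pairs trivially with $t^2=\gamma_1^2$, pairing the difference with $c_1(\eta)$ or with $c\odot 1_1$ gives the same value, so your detection criterion ``$\lambda=1$ iff $\langle c_1(\tilde\theta^*\eta)-c_1(\theta^*\eta),[\RP^2]\rangle\neq 0$'' remains valid.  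Your final paragraph, however, asserts that $c_1\odot 1_1$ ``pulls back nontrivially from $B\Sigma_2$''; that is the behaviour of $c_1(\eta)$, not of $c\odot 1_1$, and you should rephrase accordingly.  A second minor point: your description of the tautological line as ``linear forms vanishing at $v$'' is a dual picture; the direct description has fibre $\C v\subset\C^2$ at $[v]$.  Either description gives complementary lines at antipodal points, so the trivialization of $\tilde\theta^*\eta$ goes through unchanged.
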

\begin{proof}
We consider the basis of decorated Hopf monomials $ \mathcal{B} $ introduced by Sinha and the authors \cite{GSS} and we let $ \mathcal{B}^\vee = \{ x^\vee: x \in \mathcal{B} \} $ be the dual basis. We also let $ c \in H^2(D_1(BU(1)); \mathbb{F}_2) $ be the (mod $ 2 $ restriction of) the first universal Chern class.

We will also adopt the geometric (co)homology framework developed by Friedman--Medina-Mardones--Sinha \cite{FMS}.

To improve clarity, we split the proof in two distinct claims.

\textbf{Claim 1.} $ \tilde{Q}_2([1]) = Q_2([1]) + \lambda (c_1 \odot 1_1)^\vee $ for some $ \lambda \in \mathbb{F}_2 $.
To prove this, we consider the $ E_\infty $-morphisms $ \iota \colon D(\{*\}) \to D(BU(1)) $ and $ p \colon D(BU(1)) \to D(S^0) $ induced by a map that sends $ * $ to any point of $ BU(1) $ and by the projection $ BU(1) \to \{*\} $, respectively.

If we endow $ D(BU(1)) $ with the Atiyah $ E_3 $-structure and $ D(\{*\}) $ with the restriction of the classical $ E_\infty $-structure, $ p $ is an $ E_3 $-morphism and the composition $ p \circ \iota $ is the identity. Hence that $ \ker(p_*) = (\ker(\iota^*))^\vee $ is the subspace of $ H_*(D(BU(1)); \mathbb{F}_2) $ spanned by linear duals of decorated Hopf monomials where $ c $ appears. In dimension $ 2 $ and component $ 2 $, this is generated by $ (c \odot 1_1)^\vee $.
We deduce that $ \tilde{Q}_2([1]) = Q_2([1]) + \lambda (c \odot 1_1)^\vee $ for some $ \lambda \in \mathbb{F}_2 $.

We remark that Claim 1 is enough to prove that the stabilizations of the two $ E_3 $-structures on $ D_\infty(BU(1)) \times \mathbb{Z} $ are homotopically different, but not to prove the same for that their $ 0 $-component.

\textbf{Claim 2.} $ \lambda = 1 $.
To determine $ \lambda $, we argue geometrically. By choosing $ \mathbb{CP}^\infty $ as a model for  $ BU(1) $, we can identify $ D_2(BU(1)) $ with the unordered labeled configuration space
\[
M = \overline{\Conf}_2(\mathbb{R};\mathbb{CP}^\infty) = \frac{\Conf_2(\mathbb{R}^\infty; \mathbb{CP}^\infty)}{\Sigma_2}.
\]
$ M $ is the direct limit of its finite-dimensional submanifolds
\[
M_n = \overline{\Conf}_2(\mathbb{R}^n; \mathbb{CP}^n).
\]
Assume that $ r_W \colon W \to M $ is a proper finite-codimensional immersion into $ M $ such that the proper manifold over $ M_n $ $ W_n = r_W^{-1}(M_n) $ has trivial mod $ 2 $ boundary in the geometric cochain complex of $ M_n $. Then, as $ H^*(M_n; \mathbb{F}_2) = \varprojlim_{n} H^*(M_n; \mathbb{F}_2) $ and $ [W_n]|_{H^*(M_{n-1}; \mathbb{F}_2)} = [W_{n-1}] $, the limit $ \varprojlim_n [W_n] $ define a mod $ 2 $ cohomology class $ [W] \in H^*(M; \mathbb{F}_2) $. We call $ [W] $ the cohomology class geometrically represented by $ W $.

We let $ W $ be the sub-manifold of $ M $ given by configurations of two points $ p_1 $ and $ p_2 $ with labels $ x_1 $ and $ x_2 $ such that the first projective coordinate of $ x_1, x_2 $ and $ x_3 $ are all $ 0 $.
We let $ r_W \colon W \to M $ be the embedding.
We observe that $ r_W $ is proper and that $ c \odot 1_1 $ is represented by $ W $.

We also observe that $ \tilde{Q}_2([1]) $ is a geometric chain represented by the submanifold $ \widetilde{\Orb}_{(2)} $, diffeomorphic to $ \mathbb{P}^2(\mathbb{R}) $, of configuration of points of the form $ \nu_{2,1,1}(S \times_{\Sigma_2} \{*\}^2) $, where $ S $ is the subspace of configurations of two antipodal points on $ S^2 $ and $ * \in BU(1) $ is any chosen basepoint (compare with \cite[Proposition 4.5]{Sinha:12}).

One can compute $ \lambda =  \langle \tilde{Q}_2[1], c \odot 1_1 \rangle $ by computing the fibered product of $ r_W $ and $ \widetilde{\Orb}_{(2)} $ and counting its cardinality mod $ 2 $. A direct calculation shows that, if we choose $ * = [1:0:0:\dots] $,
\[
\widetilde{\Orb}_{(2)} = \{((p,-p),(j(p),j(\overline{p}^{-1}))): p \in S^2 \},
\]
where $ j \colon S^2 \cong \mathbb{CP}^1 \to \mathbb{CP}^\infty $ is the inclusion.
It intersects the image of $ r_W $ in a single point. Therefore $ \lambda = 1 $.
\end{proof}

\begin{proposition} \label{prop:different}
In the mod $ 2 $ homology of the group completion $ D_\infty(BU(1)) \times \mathbb{Z} $,
\[
\tilde{Q}_2(\tilde{Q}_2([1])*[-2]) \not= Q_2(\tilde{Q}_2([1])*[-2]).
\]
In particular, the classical and Atiyah $ E_3 $-structures on the $ 0 $-component of the stabilization $ D_\infty(BU(1)) $ are different.
\end{proposition}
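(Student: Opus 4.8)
The plan is to reduce the claim, via the Cartan formula and Proposition~\ref{prop:E2 same}, to the non-vanishing of a single homology class supported on four points, and then to identify that class by the geometric methods already used in Lemma~\ref{lem:identity unstable}.

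First I would set $ \delta := \tilde{Q}_2 - Q_2 $, an operator on $ H_*(D_\infty(BU(1)) \times \mathbb{Z};\F_2) $ with $ \delta(H_d) \subseteq H_{2d+2} $ that doubles the component. Each of $ \tilde{Q}_2 $ and $ Q_2 $ fails to be additive only by a term involving the $ E_2 $-Browder bracket, and these brackets coincide by Proposition~\ref{prop:E2 same}, so $ \delta $ \emph{is} additive. By the same proposition $ \tilde{Q}_0 = Q_0 = (-)^{*2} $ and $ \tilde{Q}_1 = Q_1 $; subtracting the internal Cartan formulas for the two $ E_3 $-structures — every term built from $ \tilde{Q}_0, \tilde{Q}_1 $ and the Browder bracket cancels against its classical counterpart — leaves the twisted Leibniz rule
\[
\delta(x * y) = \delta(x) * y^{*2} + x^{*2} * \delta(y).
\]
As $ \delta $ kills the multiplicative unit and $ * $ is commutative, $ \delta([-2]) = \delta([-1])*[-2] + [-2]*\delta([-1]) $ is $ 0 $ over $ \F_2 $, so applying the Leibniz rule to $ \tilde{Q}_2([1])*[-2] $ gives
\[
\delta\!\left(\tilde{Q}_2([1]) * [-2]\right) = \delta\!\left(\tilde{Q}_2([1])\right) * [-4].
\]
Multiplication by $ [-4] $ is invertible on the homology of the group completion, and $ [1] $ is not a zero-divisor in the polynomial $ \F_2 $-algebra $ H_*(D(BU(1));\F_2) $, so the localization into the group completion is injective; hence the Proposition is equivalent to $ \delta(\tilde{Q}_2([1])) = \tilde{Q}_2(\tilde{Q}_2([1])) - Q_2(\tilde{Q}_2([1])) \neq 0 $ in $ H_6(D_4(BU(1));\F_2) $. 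Since $ p\colon D(BU(1)) \to D(S^0) $ is an $ E_3 $-morphism for \emph{both} structures (proof of Lemma~\ref{lem:identity unstable}), $ p_*\circ\delta = 0 $, so this class lies in the span of the duals of the decorated Hopf monomials of \cite{GSS} in which the first Chern class occurs — which narrows down the dual classes one must test against.

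Finally I would evaluate $ \tilde{Q}_2(\tilde{Q}_2([1])) - Q_2(\tilde{Q}_2([1])) $ geometrically, as in Claim~2 of the proof of Lemma~\ref{lem:identity unstable} and with the framework of \cite{FMS}. The class $ \tilde{Q}_2([1]) $ is represented by the submanifold $ \widetilde{\Orb}_{(2)}\cong\RP^2 $ found there, so $ \tilde{Q}_2(\tilde{Q}_2([1])) $, resp.\ $ Q_2(\tilde{Q}_2([1])) $, is represented by the image of $ S^2 \times_{\Sigma_2} \big(\widetilde{\Orb}_{(2)}\big)^{\times 2} $ under the Atiyah structure map $ \nu_{2:2,2} $, resp.\ under the classical one. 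The two representatives carry the same four points of $ \mathbb{R}^\infty $; by Lemma~\ref{lem:directions} they differ only in that, in the Atiyah case, the two $ BU(1) $-labels coming from each inner copy of $ \widetilde{\Orb}_{(2)} $ get multiplied in $ SP(\CP^1) $ by the square of the outer direction $ x_{1,2} $, resp.\ of its antipode $ x_{2,1} $, so those labels leave $ \CP^1 $. I would then pick $ \eta\in H^6(D_4(BU(1));\F_2) $ dual to a decorated Hopf monomial detecting this shift, represent $ \eta $ by a proper submanifold of $ D_4(BU(1)) $, and compute modulo $ 2 $ the cardinality of its transverse intersection with each representative. The hard part will be exactly this four-point count: one must propagate the weights and directions through the iterated weighted Atiyah map, and choose $ \eta $ so that the classical representative contributes $ 0 $ (its labels staying in $ \CP^1 $) while the Atiyah one contributes $ 1 $. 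Once the two counts are shown incongruent modulo $ 2 $, $ \delta(\tilde{Q}_2([1]))\neq 0 $, and the Proposition follows.
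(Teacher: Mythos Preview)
Your outline is essentially the same strategy as the paper's: reduce to an unstable inequality between $ \tilde{Q}_2\tilde{Q}_2([1]) $ and $ Q_2\tilde{Q}_2([1]) $ in $ H_6(D_4(BU(1));\F_2) $, then detect the difference by pairing with a single cohomology class and counting geometric intersections. Your packaging via the operator $ \delta=\tilde{Q}_2-Q_2 $ and the twisted Leibniz rule is a clean way to reorganise what the paper does by brute-force Cartan expansions in Claim~1.

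There is, however, a genuine slip in your justification of the Leibniz rule. The failure of additivity and the correction term in the Cartan formula for the \emph{top} operation $ Q_2 $ of an $ E_3 $-structure involve the $ E_3 $-Browder bracket $ \lambda_2 $, not the $ E_2 $-bracket; Proposition~\ref{prop:E2 same} only identifies the underlying $ E_2 $-structures and hence says nothing about $ \tilde\lambda_2 $ versus $ \lambda_2 $. The classical $ \lambda_2 $ vanishes (it comes from $ E_\infty $), but $ \tilde\lambda_2 $ need not vanish in general. What saves your argument is that in every instance you actually use the rule one factor is $ [\pm2]=[\pm1]^{*2} $, and the derivation property gives $ \tilde\lambda_2(-,[\pm1]^{*2})=2\,[\pm1]*\tilde\lambda_2(-,[\pm1])=0 $ over $ \F_2 $; likewise $ \tilde\lambda_2([-1],[-1]) $ contributes $ 0 $ to $ \delta([-2]) $. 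So the reduction $ \delta(\tilde{Q}_2([1])*[-2])=\delta(\tilde{Q}_2([1]))*[-4] $ is correct, but for the reason just given, not the one you cite. The paper makes the same silent use of $ \tilde\lambda_2(-,[-2])=0 $ in its Claim~2.

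On the final step you diverge slightly from the paper and leave the real content unproved. The paper does \emph{not} try to compute $ Q_2(\tilde{Q}_2([1])) $ geometrically; it computes $ Q_2(\tilde{Q}_2([1])*[-2]) $ purely algebraically (Adem relations, Cartan formula, and the known pairing between the Hopf-monomial and Nakaoka bases) and reads off that the coefficient of $ (c\dip{3}\odot 1\dip{*})^\vee $ is $ 0 $. It then reduces the $ \tilde{Q}_2 $ side to $ \langle c\dip{3}\odot 1_1,\ \tilde{Q}_2\tilde{Q}_2([1])\rangle $ and evaluates this as a single transverse intersection of an explicit $ 6 $-manifold $ \widetilde{\Orb}_{(2,2)}\cong S^2\times_{\Sigma_2}(\RP^2)^2 $ with an explicit geometric representative $ r_W $. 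Your heuristic that the classical representative ``stays in $ \CP^1 $'' while the Atiyah one does not is the right intuition for why $ c\dip{3}\odot 1_1 $ is the class to test against, but it is not itself a proof; the paper's explicit coordinate description of $ \widetilde{\Orb}_{(2,2)} $ and the intersection count are precisely the ``hard part'' you defer.
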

\begin{proof}
We again split the proof in two claims. We also use the identity granted by the lemma above.

\textbf{Claim 1.} In terms of linear duals with respect to the basis of stabilized decorated Hopf monomials in cohomology,
\begin{align*}
Q_2(\tilde{Q}_2[1] * [-2]) &= (\gamma_2^2 \odot 1\dip{*})^\vee + ((\gamma_1\dip{2})^3 \odot 1\dip{*})^\vee + ((\gamma_1\dip{2})^2 \odot \gamma_1 \odot 1\dip{*})^\vee \\
&+ (\gamma_1^2 c\dip{2} \odot 1\dip{*})^\vee + (\gamma_1 c\dip{2} \odot \gamma_1 \odot 1\dip{*})^\vee + (c\dip{2} \odot \gamma_1\dip{2})^\vee.
\end{align*}

From the Adem relations we deduce that $ Q_2 Q_1 = Q_1 Q_0 = 0 $ and that $ Q_2 Q_0 = Q_0 Q_1 $.
From the compatibility between the Kudo--Araki operations and the Hopf algebra structure of $ H_*(D_\infty(BU(1)); \mathbb{F}_2) $ and the pairing between the Hopf monomial basis of $ H^*(D(BU(1)); \mathbb{F}_2) $ and the Nakaoka basis in $ H_*(D(BU(1)); \mathbb{F}_2) $ we deduce that:
\begin{gather*}
Q_0[-2] = [-4]\\
Q_1[-2] = Q_1Q_0[-1] = 0 \\
0 = Q_1([1]*[-1]) = Q_1[1] * [-2] + [2] * Q_1[-1] \Rightarrow Q_1[-1] = Q_1[1] * [-4] \\
0 = Q_2([1]*[-1]) = Q_2[1] * [-2] + Q_1[1] * Q_1[-1] + [2] * Q_2[-1] \\
\Rightarrow Q_2[-1] = Q_2([1]) * [-4] + Q_0Q_1([1]) * [-6] \\
Q_2[-2] = Q_2Q_0[-1] = Q_0Q_1[-1] = Q_0Q_1[1] * [-8] \\
Q_2(\tilde{Q}_2[1] * [-2]) = Q_2(Q_2[1] * [-2]) + Q_2(c^\vee * [-1]) \\
= Q_2Q_2([1]) * Q_0Q_0([-1]) + Q_1Q_2([1]) * Q_1Q_0([-1]) + Q_0Q_2([1])*Q_2Q_0([-1]) \\
+ Q_2(c^\vee) *Q_0([-1]) + Q_1(c^\vee) * Q_1([-1]) + Q_0(c^\vee) * Q_2([-1]) \\
= Q_2Q_2([1]) * [-4] + Q_0Q_2([1])*Q_0Q_1([-1]) +Q_2(c^\vee) *[-2] \\
+ Q_1(c^\vee)*Q_1([-1]) + Q_0(c^\vee) * Q_2([-1]) \\
= Q_2Q_2([1]) * [-4] + Q_0Q_2([1])*Q_0Q_1([1])*[-8] + Q_2(c^\vee)*[-2] \\
+ Q_1(c^\vee)*Q_1([1])*[-4] + c^\vee * c^\vee * Q_2([1])* [-4] + c^\vee * c^\vee * Q_0Q_1([1]) * [-6] \\
= (\gamma_2^2 \odot 1\dip{*})^\vee + ((\gamma_1\dip{2})^3 \odot 1\dip{*})^\vee + ((\gamma_1\dip{2})^2 \odot \gamma_1 \odot 1\dip{*})^\vee \\
+ (\gamma_1^2 c\dip{2} \odot 1\dip{*})^\vee + (\gamma_1 c\dip{2} \odot \gamma_1 \odot 1\dip{*})^\vee + (c\dip{2} \odot \gamma_1\dip{2})^\vee.
\end{gather*}

\textbf{Claim 2}. $ \langle (c\dip{3} \odot 1\dip{*}), \tilde{Q}_2(\tilde{Q}_2([1])*[-2]) \rangle = 1 $. As $ \langle (c\dip{3} \odot 1\dip{*}), Q_2(\tilde{Q}_2([1]) * [-2]) \rangle = 0 $ by Claim 1, this completes the proof.

By Proposition \ref{prop:E2 same} and the Adem relations, $ \tilde{Q}_2 Q_1 = 0 $ and $ \tilde{Q}_2 Q_0 = Q_0 Q_1 $.

By the compatibility between the Kudo--Araki operations and the Hopf algebra structure on $ H_*(D_\infty(BU(1)); \mathbb{F}_2) $ we obtain that:
\begin{align*}
\tilde{Q}_2(\tilde{Q}_2([1])*[-2]) &= \tilde{Q}_2\tilde{Q}_2([1]) * [-4] + Q_1\tilde{Q}_2([1]) * Q_1Q_0([-1]) + \\
Q_0\tilde{Q}_2([1]) * \tilde{Q}_2Q_0([-1]) &= \tilde{Q}_2\tilde{Q}_2([1]) * [-4] + Q_0\tilde{Q}_2([1])*Q_0Q_1([1])*[-8].
\end{align*}

We deduce from the previous equality and from the definition of stabilized Hopf monomials that
\begin{align*}
\langle c\dip{3} \odot 1\dip{*}, \tilde{Q}_2(\tilde{Q}_2([1])*[-2]) \rangle &= \langle c\dip{3} \odot 1\dip{*}, \tilde{Q}_2 \tilde{Q}_2([1])*[-4] \rangle \\
+ \langle c\dip{3} \odot 1\dip{*}, Q_0\tilde{Q}_2([1])*Q_0Q_1([1])*[-8] \rangle &= \langle c\dip{3} \odot 1_1, \tilde{Q}_2\tilde{Q}_2([1]) \rangle \\
+ \langle c\dip{3} \odot 1_5, Q_0\tilde{Q}_2([1])*Q_0Q_1([1]) \rangle.
\end{align*}

A direct calculation using the Hopf monomial basis shows that
\[
\langle c\dip{3} \odot 1_5, Q_0\tilde{Q}_2([1])*Q_0Q_1([1]) \rangle = 0.
\]
Therefore, it is enough to prove that
\[
\langle c\dip{3} \odot 1_1, \tilde{Q}_2\tilde{Q}_2([1]) \rangle = 1.
\]

In order to do so, we use geometric cohomology and we argue as done for Lemma \ref{lem:identity unstable}. Choose $ M = \overline{\Conf}_4(\mathbb{R}^\infty; \mathbb{CP}^\infty) $ as a model for $ D_4(BU(1)) $.
Let $ W \subseteq \frac{\Conf_4(\mathbb{R}^\infty; \mathbb{CP}^\infty)}{\Sigma_1 \times \Sigma_3} $ be the finite-codimensional submanifold consisting of configuration of four points $ p_1,\dots,p_4 $ with labels $ x_1,\dots,x_4 $ where the first coordinate of $ x_1 $ is $ 0 $. Let $r_W \colon W \to M $ be the composition of the embedding of $ W $ inside $ \frac{\Conf_4(\mathbb{R}^\infty; \mathbb{CP}^\infty)}{\Sigma_1 \times \Sigma_3} $ and the quotient map to $ M $.
$ c\dip{3} \odot 1_1 $ is represented by the manifold over $ M $ $ r_W \colon W \to M $, in the sense made explicit in the proof of Claim 2.

Moreover, $ \tilde{Q}_2\tilde{Q}_2([1]) $ is represented by the compact submanifold $ \widetilde{\Orb}_{(2,2)} $, diffeomorphic to $ S^2 \times_{\Sigma_2} (\mathbb{RP}^2)^2 $, of configuration of points of the form $ \nu_{2;2,2} \circ (\id_S \times_{\Sigma_2} \nu_{2;1,1}^2) (S \times_{\Sigma_2} (S \times_{\Sigma_2} \{*\})^2) $, where $ S $ is the subspace of configuration of two antipodal points on $ S^2 $ and $ * \in BU(1) $ is any chosen basepoint.

If we choose $ * = [1:0:0:\dots] $, a direct calculation shows that, by identifying $ S^2 $ with $ \mathbb{CP}^1 = \mathbb{C} \cup \{\infty\} $,
\begin{gather*}
\widetilde{\Orb}_{(2,2)} = \{\left[((q+\frac{1}{2}p,q-\frac{1}{2}p,-q+\frac{1}{2}p,-q-\frac{1}{2}p),\right. \\
([pq:-(p+q):1:0:0:\dots],[q:-(1+\overline{p}q):\overline{p}:0:0:\dots],\\
[p:-(1+p\overline{q}):\overline{q}:0:0:\dots], [1:-(\overline{p}+\overline{q}):\overline{p}\overline{q}:0:0:\dots])) \Bigg]:p.q \in S^2\}.
\end{gather*}

This compact submanifold intersects the image of $ r_W $ in a single non-singular point, thus
\[
\langle c\dip{3} \odot 1_1, \tilde{Q}_2\tilde{Q}_2([1]) \rangle = 1.
\]
\end{proof}

\begin{remark}
With some more work involving the computation of pairing between some geometric homology and geometric cohomology representatives, one can explicitly calculate $ \tilde{Q}_2 \left( \tilde{Q}_2([1]) *[-2] \right) $ and obtain the identity
\begin{align*}
\tilde{Q}_2\left( \tilde{Q}_2([1])*[-2] \right) &= ((\gamma_1\dip{2})^2 \odot 1\dip{*})^\vee + (c\dip{2} \odot \gamma_1\dip{2} \odot 1\dip{*})^\vee + ((\gamma_1^2) \odot 1\dip{*})^\vee \\
&+ ((\gamma_1\dip{2})^3 \odot 1\dip{*})^\vee + (\gamma_1c\dip{2} \odot \gamma_1 \odot 1\dip{*})^\vee \\
&+ (c\dip{3} \odot 1\dip{*})^\vee + (c^2 \odot c \odot 1\dip{*})^\vee + (c\dip{3} \odot 1\dip{*})^\vee.
\end{align*}
We will not dig into the lengthy calculations, that can be done as in the proof of the previous proposition, because we do not need them in the remaining part of this paper.
\end{remark}

\section{The $ E_3 $-structure on unordered flag manifolds and its non-extendibility.}

The mod $ 2 $ (co)homology of complete unordered flag manifolds is computed by Guerra and Jana in \cite{Guerra-Santanil}. In that paper it is shown that $ H^*(\overline{\Fl}_\infty(\mathbb{C}); \mathbb{F}_2) $ is the quotient of $ H^*(D_\infty(BU(1)); \mathbb{F}_2) $ by the ideal generated by the Chern classes of a certain fiber bundle. In particular, the restriction map $ H^*(D_\infty(BU(1)); \mathbb{F}_2) \to H^*(\overline{\Fl}_\infty(\mathbb{C}); \mathbb{F}_2) $ is surjective. Dually, the map in homology $ H_*(\overline{\Fl}_\infty(\mathbb{C}); \mathbb{F}_2) \to H_*(D_\infty(BU(1)); \mathbb{F}_2) $ is injective.

Assuming the $ E_3 $-Atiyah conjecture, the inclusion $ \overline{\Fl}_\infty(\mathbb{C}) \hookrightarrow D_\infty(BU(1)) $ is an $ E_3 $-map, hence its induced map in homology preserves Dyer--Lashof operations. Consequently, injectivity in homology and Proposition \ref{prop:different} imply the following result.

\begin{corollary}
Assume that the $ E_3 $-Atiyah conjecture is true. Then the classical and Atiyah $ E_3 $-structures on the group completion of $ \bigsqcup_n \overline{\Fl}_n(\mathbb{C}) $ are different (and non-homotopic).
\end{corollary}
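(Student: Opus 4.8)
The plan is to reduce the statement entirely to facts already in hand: the homology-level non-coincidence of the classical and Atiyah $E_3$-structures on $D_\infty(BU(1))$ proved in Proposition~\ref{prop:different}, together with the injectivity in mod $2$ homology of the inclusion of the complete unordered flag manifolds into $D_\infty(BU(1))$, which follows from the surjectivity in cohomology established in \cite{Guerra-Santanil}. First I would recall that, assuming the $E_3$-Atiyah conjecture, the $E_3$-structure on $D(BU(1))$ lifts to $\bigsqcup_n \overline{\Fl}_n(\mathbb{C})$, and the inclusion $\bigsqcup_n \overline{\Fl}_n(\mathbb{C}) \hookrightarrow D(BU(1))$ is a morphism of $E_3$-algebras; passing to group completions, the induced map $H_*(\text{group completion of }\bigsqcup_n \overline{\Fl}_n(\mathbb{C}); \mathbb{F}_2) \to H_*(D_\infty(BU(1)) \times \mathbb{Z}; \mathbb{F}_2)$ is a map of modules over the Dyer--Lashof algebra, hence commutes with the operations $\tilde{Q}_i$. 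The analogous statement holds for the classical $E_\infty$-structure restricted to $E_3$, giving compatibility with $Q_i$.

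Next I would argue that this map in homology is injective. The relevant input is that $H^*(\overline{\Fl}_\infty(\mathbb{C}); \mathbb{F}_2)$ is a quotient of $H^*(D_\infty(BU(1)); \mathbb{F}_2)$, so the restriction map in cohomology is surjective; dualizing over a field, the map in homology is injective. One should check that group completion preserves this injectivity: since group completion of a homotopy-commutative monoid is computed on homology by localizing the Pontryagin ring at the multiplicative subset generated by $\pi_0$, and this localization is exact over $\mathbb{F}_2$, injectivity is preserved. This is the only point requiring a small amount of care beyond quoting \cite{Guerra-Santanil}.

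Then the conclusion is immediate. Proposition~\ref{prop:different} exhibits an explicit class --- namely $\tilde{Q}_2([1])*[-2]$ --- on which the operations $\tilde{Q}_2$ and $Q_2$ disagree in $H_*(D_\infty(BU(1)) \times \mathbb{Z}; \mathbb{F}_2)$. The element $[1] \in H_0$ lies in the image of the injection from the flag manifolds (it is the fundamental class of the one-point space $\overline{\Fl}_1(\mathbb{C})$, or more precisely its image under the unit), and the products and operation $\tilde{Q}_2$ applied to it stay in the image since that image is a sub-Dyer--Lashof-module closed under the Pontryagin product. If the classical and Atiyah $E_3$-structures on the group completion of $\bigsqcup_n \overline{\Fl}_n(\mathbb{C})$ were homotopic as $E_3$-structures, the induced $\tilde{Q}_2$ and $Q_2$ on its mod $2$ homology would coincide, and then by injectivity and naturality they would coincide on the image inside $H_*(D_\infty(BU(1)) \times \mathbb{Z}; \mathbb{F}_2)$ --- contradicting Proposition~\ref{prop:different}. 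Hence the two $E_3$-structures are not homotopic.

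The main obstacle is not any single step but rather making precise the bookkeeping around group completion: one must ensure that the lifted $E_3$-structures genuinely descend to compatible $E_3$-structures on the group completions (so that Dyer--Lashof operations are defined there and are natural for the inclusion), and that the relevant class $\tilde{Q}_2([1])*[-2]$, which involves the negative component $[-2]$ not present before group completion, still lies in the image of the map from the group-completed flag manifolds. Both are true --- group completion is a functor on $E_\infty$- (hence $E_3$-) algebras, and the image is closed under the localization because it is a localization of a subring --- but spelling this out is where the real content of the argument sits.
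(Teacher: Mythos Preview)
Your proposal is correct and follows essentially the same route as the paper: under the $E_3$-Atiyah conjecture the inclusion $\overline{\Fl}_\infty(\mathbb{C}) \hookrightarrow D_\infty(BU(1))$ is an $E_3$-map, hence preserves Dyer--Lashof operations; the map in mod $2$ homology is injective by \cite{Guerra-Santanil}; and Proposition~\ref{prop:different} then distinguishes the two structures. The paper states this argument in two sentences immediately before the corollary, whereas you spell out the group-completion bookkeeping (functoriality, exactness of localization, the class $\tilde{Q}_2([1])*[-2]$ lying in the image) that the paper leaves implicit by working directly with the stabilized spaces.
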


We also discuss the possibility of extending these structures to higher order homotopies.
\begin{proposition} \label{prop:non-extendible}
The Atiyah $ E_3 $-action on $ \bigsqcup_n \overline{\Fl}_n(\mathbb{C}) $ does not extend to an $ E_4 $-action. The Atiyah $ E_2 $-action on $ \bigsqcup_n \overline{\Fl}_n(\mathbb{R}) $ does not extend to an $ E_3 $-action.
\end{proposition}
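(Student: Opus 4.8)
The plan is to exhibit a primary obstruction in mod $2$ homology, exploiting the geometry of the operad inclusions $\FM_3\hookrightarrow\FM_4$ and $\FM_2\hookrightarrow\FM_3$. The key homotopical input is standard: if an $E_k$-algebra structure extends to an $E_{k+1}$-algebra structure, then on mod $2$ homology the top Browder bracket $\lambda_{k-1}$ of the underlying $E_k$-structure vanishes, because in arity $2$ the operad map is the equatorial inclusion $\FM_k(2)\simeq S^{k-1}\hookrightarrow S^{k}\simeq\FM_{k+1}(2)$, which is null-homotopic and in particular zero on $\widetilde H_{k-1}(-;\mathbb{F}_2)$, while $\lambda_{k-1}$ is by definition the image of the fundamental class of $\FM_k(2)$. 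Thus, in the complex case, it suffices to produce classes in $H_*(\overline{\Fl}_\infty(\mathbb{C});\mathbb{F}_2)$ on which the top Browder bracket $\lambda_2$ of the Atiyah $E_3$-structure is non-zero; in the real case one needs $\lambda_1\neq 0$ for the Atiyah $E_2$-structure on $\bigsqcup_n\overline{\Fl}_n(\mathbb{R})$.

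To compute this bracket I would first transport the problem to $D(BU(1))$: granting the $E_3$ Atiyah--Sutcliffe conjecture (without which the Atiyah $E_3$-action on the flag manifolds does not exist), the inclusion $\bigsqcup_n\overline{\Fl}_n(\mathbb{C})\hookrightarrow D(BU(1))$ is an $E_3$-map which is injective on mod $2$ homology by Guerra--Jana's computation, so it is enough to exhibit such classes inside $H_*(D(BU(1));\mathbb{F}_2)$. The point is that the underlying $E_2$-structure of the Atiyah $E_3$-structure on $D(BU(1))$ is the classical one (Proposition~\ref{prop:E2 same}), and the classical $E_\infty$-structure has all Browder brackets trivial; hence the entire bracket $\lambda_2$ of the Atiyah structure is produced by the ``twist'' $A_n^{(1,\dots,1)}\colon\FM_3(n)\to BU(1)^n$ entering the definition of $\nu_{n:1,\dots,1}$, which is non-trivial on homology already in arity $2$ (the map $A_2^{(1,1)}\colon\FM_3(2)\simeq S^2\to BU(1)^2$ is essentially $v\mapsto(v,\sigma v)$, non-trivial on $H_2(-;\mathbb{F}_2)$; compare Lemma~\ref{lem:identity unstable}). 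The remaining task is to choose a class built from a few points — as in Proposition~\ref{prop:different}, a $4$-point class should suffice — lying in the image of the flag homology, and to check that $\lambda_2$ applied to it is non-zero using the geometric-chain and mod $2$ intersection-count methods of Section~4. The real case is handled the same way, replacing $\FM_3\hookrightarrow\FM_4$ by $\FM_2\hookrightarrow\FM_3$ (so $S^1\hookrightarrow S^2$ in arity $2$, and $\lambda_1$ the relevant bracket), $BU(1)$ by $BO(1)$, and the complex computations by their real analogues.

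The main obstacle is this last step: verifying non-vanishing of the Browder operation for Atiyah's conjectural map. As in Section~4, since the map is only conjectural its high-dimensional geometry is inaccessible, so one must stay in low arity and reduce the claim to an explicit mod $2$ count of a transverse intersection between a compact submanifold representing the Browder operation and a finite-codimensional submanifold dual to a geometric cohomology generator. A secondary difficulty is that the detecting class must be chosen so as to survive the passage from $H_*(D(BU(1));\mathbb{F}_2)$ to $H_*(\overline{\Fl}_\infty(\mathbb{C});\mathbb{F}_2)$, i.e. to avoid the relations imposed by the Chern classes in Guerra--Jana's presentation; reconciling these two constraints is what makes the bookkeeping delicate, and is presumably the reason a class based on $4$ points is needed.
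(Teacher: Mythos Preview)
Your plan and the paper's proof rest on the same underlying observation---the equatorial inclusion $\FM_k(2)\simeq S^{k-1}\hookrightarrow S^k\simeq\FM_{k+1}(2)$ is null-homotopic---but the paper exploits it at the level of \emph{homotopy groups} rather than mod $2$ homology, and this makes the argument short and complete. Concretely, the paper takes $\varphi\colon S^2\to D_2(BU(1))$, $p\mapsto\nu_{2;1,1}((p,-p),*,*)$, lifts it to the ordered cover, projects to $BU(1)^2\simeq K(\mathbb{Z};2)^2$, and reads off that the class in $\pi_2(BU(1)^2)\cong\mathbb{Z}^2$ is $(1,\pm1)\neq 0$. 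Since $\overline{\Fl}_1=\{*\}$, this $\varphi$ is exactly the arity-$2$ structure map of the flag $E_3$-algebra composed with the inclusion into $D_2(BU(1))$; an $E_4$-extension on the flag side would force it to extend over $\FM_4(2)\simeq S^3$ and hence be null-homotopic, a contradiction. The real case is parallel, with $S^1$, $BO(1)=K(\mathbb{Z}/2;1)$ and $\pi_1$ in place of $S^2$, $BU(1)$ and $\pi_2$. No Dyer--Lashof or Browder computation, no $4$-point class, and no appeal to Guerra--Jana injectivity is needed.

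Your mod $2$ Browder route hits a structural obstacle you do not confront. One has $\lambda_{k-1}(x,x)=0$ identically in mod $2$ homology: the map $\varphi$ factors through the antipodal quotient $\mathbb{RP}^2$, and the double cover $S^{k-1}\to\mathbb{RP}^{k-1}$ kills the fundamental class in $H_{k-1}(-;\mathbb{F}_2)$. Thus the only arity-$2$ candidate $\lambda_2([1],[1])$, with $[1]\in H_0(\overline{\Fl}_1)$, vanishes automatically; equivalently $\varphi_*[S^2]=0$ in $H_2(-;\mathbb{F}_2)$ even though $[\varphi]\neq 0$ in $\pi_2$. Because $\overline{\Fl}_1$ is a point there are no other inputs from component $1$, so you are forced to classes in $\overline{\Fl}_n$ with $n\geq 2$ and to a genuinely harder computation that you neither carry out nor show is non-trivial. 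In short, the paper's choice of $\pi_2$ over $H_2(-;\mathbb{F}_2)$ is precisely what detects the obstruction at arity $2$ and avoids the detour you anticipate.
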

\begin{proof}
We recall that $ U(1) = K(\mathbb{Z},1) $, hence $ BU(1) = K(\mathbb{Z};2) $. Therefore $ \pi_2(BU(1)) \cong \mathbb{Z} $. If we realize $ BU(1) $ as $ \mathbb{CP}^\infty $, a free generator for $ \pi_2(BU(1)) $ is the class of the inclusion map $ j \colon S^2 \cong \mathbb{CP}^1 \hookrightarrow \mathbb{CP}^\infty $.

Let $ i \colon S^2 \to \Conf_2(\mathbb{R}^3) \subseteq \FM_3(2) $ be defined by $ i(p) = (p,-p) $.
We Let $ * $ be a basepoint in $ BU(1) $ and we consider the continuous map given by the composition
\[
\varphi \colon S^2 \stackrel{i}{\to} \Conf_2(\mathbb{R}^3) \stackrel{\nu_{2;1,1}(\cdot,*,*)}{\to} D_2(BU(1)) = \overline{\Conf}_2(\mathbb{R}^\infty; BU(1)).
\]

We are now going to show that $ [\varphi] \in \pi_2(D_2(BU(1))) $ is non-zero. We observe that the covering $ p_1 \colon \Conf_2(\mathbb{R}^\infty; BU(1)) \to \overline{\Conf}_2(\mathbb{R}^\infty; BU(1)) $ induces isomorphisms of higher homotopy groups. $ \nu_{2;1,1}|_{\Conf_2(\mathbb{R}^3)} $ lifts to
$ \tilde{\nu}_{2;1,1} \colon \Conf_2(\mathbb{R}^3) \to \Conf_2(\mathbb{R}^\infty; BU(1)) $ by using the same definition and keeping track of the order of points. Consequently, $ \varphi $ lifts to a map $ \tilde{\varphi} \colon S^2 \to \Conf_2(\mathbb{R}^\infty; BU(1)) $.
Therefore, it is enough to prove that $ [\tilde{\varphi}] \not= 0 \in \pi_2(\Conf_2(\mathbb{R}^\infty;BU(1))) $.

Since $ \Conf_2(\mathbb{R}^\infty) $ is contractible, the projection
\[
p_2 \colon \Conf_2(\mathbb{R}^\infty; BU(1)) \to BU(1)^2 = (\mathbb{CP}^\infty)^2
\]
induces an isomorphism on homotopy groups. Therefore, it is enough to check that
\[
[p_2 \circ \tilde{\varphi}] \not= 0\in \pi_2((\mathbb{CP}^\infty)^2) \cong \mathbb{Z} \times \mathbb{Z}.
\]

Let $ \sigma \colon \mathbb{CP}^1 \to \mathbb{CP}^1 $ be the map given by $ \sigma(z) = -\overline{z}^{-1} $. Note that $ \sigma $ is a topological involution of $ \mathbb{CP}^1 $, hence $ \pi_2(\sigma) \colon \pi_2(\mathbb{CP}^1) \to \pi_2(\mathbb{CP}^1) $ is an isomorphism.
A direct calculation shows that $ p_2 \circ \tilde{\varphi} = j \times (j \circ \sigma) $. This defines a non-zero class in $ \pi_2(\mathbb{CP}^\infty)^2 $, hence $ [\varphi] \not= 0 $.

We now complete our proof by arguing by contradiction. We assume that the Atiyah $ E_3 $-structure extends to an $ E_4 $-structure. Then $ \nu_{2;1,1} $ would extend to a map
\[
FM_4^2 \times BU(1)^2 \to D_2(BU(1)),
\]
and thus $ \varphi $ would extend to $ S^3 $, in which $ S^2 $ can be contracted to a point. But this is impossible because $ [\varphi] $ is non-zero in $ \pi_2(D_2(BU(1)) $, as we have just proved.
\end{proof}

\section{Stable cohomology of unordered flag manifolds and stable extendibility}

Proposition \ref{prop:non-extendible} does not imply that the $ E_3 $ (respectiverly $ E_2 $) structure on the group completion of unordered complex (respectively real) flag manifolds does not extend to an $ E_4 $ (respectively $ E_3 $), or even $ E_\infty $, action. The answer of this latter question is currently unknown.

In this section, we compute the action of Dyer--Lashof operations on the mod $ 2 $ homology of stable unordered flag manifolds, under the assumption of the $ E_3 $ Atiyah--Sutcliffe conjecture. Our calculations show that mod $ 2 $ (co)homology does not see obstructions for the $ E_3 $-action to stably extend to an $ E_\infty $-one.

\begin{proposition}
Let $ \tilde{Q}_i^* \colon H^*(\overline{\Fl}_\infty(\mathbb{C}); \mathbb{F}_2) \to H^{*-i}(\overline{\Fl}_\infty(\mathbb{C}); \mathbb{F}_2) $ be the linear dual of the Kudo--Araki--Dyer--Lashof operations (in lower indices notation) associated with the Atiyah $ E_3 $ structure. For $ i = 1,2 $, $ \tilde{Q}_i^* $ is zero on the ideal $ I $ generated by pullbacks of Chern classes.
\end{proposition}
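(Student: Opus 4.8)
The plan is to lift the question to $H^*(D_\infty(BU(1));\mathbb{F}_2)$ and reduce it, through the Cartan formula, to a statement about single Chern classes. Let $\rho\colon H^*(D_\infty(BU(1));\mathbb{F}_2)\to H^*(\overline{\Fl}_\infty(\mathbb{C});\mathbb{F}_2)$ be the restriction map, which by \cite{Guerra-Santanil} is surjective with kernel the Chern ideal $J$. Because the inclusion $\overline{\Fl}_\infty(\mathbb{C})\hookrightarrow D_\infty(BU(1))$ is a map of $E_3$-algebras, the induced map in homology commutes with the $\tilde{Q}_i$, so $\rho$ commutes with every $\tilde{Q}_i^*$. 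Fix lifts $\hat c\in H^*(D_\infty(BU(1)))$ of the Chern classes generating $I$ and let $\hat I$ be the ideal they generate. Since $\rho(\hat I)=I$ and $\rho\circ\tilde{Q}_i^*=\tilde{Q}_i^*\circ\rho$, the identity $\tilde{Q}_i^*(I)=\rho\bigl(\tilde{Q}_i^*(\hat I)\bigr)$ reduces the proposition to the inclusion $\tilde{Q}_i^*(\hat I)\subseteq J$.

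Next I would invoke the cup-product Cartan formula $\tilde{Q}_i^*(xy)=\sum_{a+b=i}\tilde{Q}_a^*(x)\,\tilde{Q}_b^*(y)$, which holds because the diagonal of $D_\infty(BU(1))$ is an $E_3$-map, so the homology coproduct is a morphism of modules over the Dyer--Lashof operations. Applied to $\hat c\,\beta$, it writes $\tilde{Q}_i^*(\hat c\,\beta)$ as a sum of terms $\tilde{Q}_a^*(\hat c)\,\tilde{Q}_b^*(\beta)$ with $a+b=i$; as $J$ is an ideal, the inclusion $\tilde{Q}_i^*(\hat I)\subseteq J$ follows as soon as $\tilde{Q}_a^*(\hat c)\in J$ for every $a\le i$. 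It is essential here that the bottom operation $\tilde{Q}_0=Q_0$ is the Pontrjagin square rather than the identity, so that its dual $\tilde{Q}_0^*$ is a nontrivial operation and the $a=0$ summand is $\tilde{Q}_0^*(\hat c)\,\tilde{Q}_i^*(\beta)$, carrying a factor $\tilde{Q}_0^*(\hat c)$, and not $\hat c\,\tilde{Q}_i^*(\beta)$. This is the point that upgrades the conclusion from mere $\tilde{Q}_i^*$-stability of $I$ to the vanishing of $\tilde{Q}_i^*$ on $I$: had $\tilde{Q}_0^*$ been the identity, the $a=0$ term would reintroduce $\hat c$ and only stability would follow.

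It remains to check $\tilde{Q}_a^*(\hat c)\in J$ for $a=0,1,2$. For $a\le 1$ the operations are classical: $\tilde{Q}_0=Q_0$ intrinsically, and $\tilde{Q}_1=Q_1$ by Proposition~\ref{prop:E2 same}. The classical Dyer--Lashof coaction on the free $E_\infty$-algebra $D_\infty(BU(1))$ is explicit on the decorated Hopf monomial basis of \cite{GSS}, and the generators of $J$ are described in \cite{Guerra-Santanil}; combining these lets one compute $\tilde{Q}_0^*(\hat c)$ and $\tilde{Q}_1^*(\hat c)$ directly and confirm that both lie in $J$, a finite check in each relevant degree. The genuinely new case is $a=2$. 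Here I would use Lemma~\ref{lem:identity unstable}, which writes $\tilde{Q}_2=Q_2$ plus a correction supported in the label direction $(c_1\odot 1_1)^\vee$; dualizing gives $\tilde{Q}_2^*(\hat c)=Q_2^*(\hat c)+\delta(\hat c)$ with $Q_2^*(\hat c)\in J$ by the classical computation, so everything reduces to $\delta(\hat c)\in J$. I would establish this with the geometric (co)homology method of \cite{FMS} already used in Proposition~\ref{prop:different}: represent $\hat c$ and the image class of $\tilde{Q}_2$ by explicit proper submanifolds of a finite-dimensional model $\overline{\Conf}_k(\mathbb{R}^\infty;\mathbb{CP}^\infty)$ of $D_k(BU(1))$ and evaluate mod-$2$ intersection numbers. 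The main obstacle is precisely this last verification: the exotic part of $\tilde{Q}_2$ is supported on the antipodal-configuration locus coming from Atiyah's map, and one must confirm that pairing $\delta(\hat c)$ against any homology class dual to a generator surviving in $H^*(\overline{\Fl}_\infty(\mathbb{C}))$ vanishes, i.e.\ that $\delta(\hat c)$ never produces a new class outside the Chern ideal $J$.
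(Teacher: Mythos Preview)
Your reduction via the cup-product Cartan formula to the generators $\hat c_n$ is sound, and the cases $a=0,1$ go through: $\tilde{Q}_0=Q_0$ and $\tilde{Q}_1=Q_1$ by Proposition~\ref{prop:E2 same}, and since $p\colon Q(BU(1))\to BU$ is an $E_\infty$-map for the classical structure, $Q_a^*(p^*(c_n))=p^*(Q_a^*(c_n))$ lies in the image of $p^*$, hence in $J$ (and is in fact zero for $a=1$ by parity). But the $a=2$ step has a genuine gap. Lemma~\ref{lem:identity unstable} is an identity for a single homology class, $\tilde{Q}_2([1])=Q_2([1])+(c_1\odot 1_1)^\vee$; it does \emph{not} give an operator identity $\tilde{Q}_2=Q_2+\delta$, and there is no way to ``dualize'' it to control $\tilde{Q}_2^*(\hat c_n)$ for all $n$. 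Your fallback---computing $\langle\hat c_n,\tilde{Q}_2(x)\rangle$ geometrically against all relevant $x$---is an unbounded family of intersection-number calculations with no uniform mechanism offered, and indeed you flag this yourself as the ``main obstacle''. As written, the argument stops exactly where the content lies.

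The paper takes a completely different route for $i=2$, avoiding any direct knowledge of $\tilde{Q}_2$ beyond formal properties. It uses the Cartan formula for $\tilde{Q}_2^*$ with respect to the \emph{coproduct} (dual to the Pontrjagin Cartan formula) together with $\Delta(p^*(c_n))=\sum p^*(c_i)\otimes p^*(c_{n-i})$ to prove, by induction on $n$, that $\tilde{Q}_2^*(p^*(c_n))$ is primitive. The primitives of $H^*(Q_0BU(1);\mathbb{F}_2)$ are explicitly classified, which confines the answer to a short list of gathered blocks. The dualized Adem relation $Q_1^*\tilde{Q}_2^*=0$ and a component-support argument prune this list to at most one candidate, namely $\gamma_N^2$ when $n=2^{N+1}$. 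That last coefficient is killed by a Steenrod-square computation combining the Nishida relations with Wu's formula. No geometric evaluation of $\tilde{Q}_2$ enters at all; the argument is entirely algebraic and uniform in $n$, which is precisely what your proposal lacks.
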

\begin{proof}
We consider the map $ p_n \colon D_n(BU(1)) \to BU(n) $ induced by the standard inclusion of $ \Sigma_n \wr U(1) $ into $ U(n) $.
We note that $ \bigsqcup_n p_n $ is an $ E_\infty $-map, inducing an $ \infty $-loop space map on the group completions $ p \colon Q(BU(1)) \to BU $.
$ p^* \colon H^*(BU(\infty); \mathbb{F}_2) \to H^*(Q_0BU(1); \mathbb{F}_2) $ preserves the operations $ Q_i^* $ and is a Hopf algebra map. In particular,
\[
\tilde{Q}_1^*(p^*(c_n)) = Q_1^*(p^*(c_n)) = p^*(Q_1^*(c_n)) = 0,
\]
the last equality being true by degree reasons.
Hence, $ Q_1^* $ is $ 0 $ on the ideal $ I $.

Regarding $ \tilde{Q}_2^* $, we recall that it satisfies the Cartan formula with respect to the coproduct in cohomology. We have that
\begin{equation} \label{eq:coproduct pullback Chern}
\begin{split}
\Delta(p^*(c_n)) &= (p^* \otimes p^*)(\Delta(c_n)) = (p^* \otimes p^*)(\sum_{i=0}^n c_i \otimes c_{n-i}) \\
&= \sum_{i=0}^n p^*(c_i) \otimes p^*(c_{n-i}).
\end{split}
\end{equation}

Using Equation~\ref{eq:coproduct pullback Chern}, we prove by induction on $ n $ that $ \tilde{Q}_2^*(p^*(c_n)) = 0 $.
For $ n = 1 $ (base of the induction), we prove by direct inspection that $ \tilde{Q}_2^*(p^*(c_1)) = 0 $. Indeed, by dimensional reasons, $ \tilde{Q}_2^*(p^*(c_1)) $ is a multiple of the dual class of $ 1 \in H_0(Q_0(BU(1)); \mathbb{F}_2) $, but $ \tilde{Q}_2(1) = 0 $.

We now assume inductively that $ \tilde{Q}_2^*(p^*(c_k)) = 0 $ for all $ k < n $ and we prove that $ \tilde{Q}_2^*(p^*(c_n)) = 0 $. By equation~\ref{eq:coproduct pullback Chern} and the Cartan formula, $ \tilde{Q}_2^*(p^*(c_n)) $ is primitive of degree $ 2n-2 $. Therefore, from \cite{GSS} we deduce that it must be a linear combination of (stabilizations of) primitive decorated gathered blocks
\[
b = \prod_{i=1}^N (\gamma_i^{[2^{N-i}]})^{a_i} \cdot ((c_1^l)^{[2^N]})^{a_0},
\]
with $ a_0,\dots,a_{N-1} \geq 0 $ and $ a_N > 0 $.

By dualizing the Adem relation $ \tilde{Q}_2 \circ Q_1 = 0 $, we obtain that $ Q_1^* \circ \tilde{Q}_2^*(c_n) = 0 $. This implies that a gathered block $ b $ as above can appear as an addend of $ \tilde{Q}_2^*(c_n) $ only if $ a_N \geq 2 $. Otherwise, it would pair non-trivially with an indecomposable element $ Q_I(x) $, where $ Q_I $ is an admissible sequece of Kudo--Araki operations starting with $ Q_1 $.

Moreover, since the restriction of $ p^*(c_n) $ to the components lower than $ n $ is $ 0 $, only gathered blocks $ b $ with $ 2^N \geq \frac{n}{2} $ can appear.

There exists gathered blocks satisfying this conditions and having the right cohomological degree if and only if $ n = 2^{N+1} $ for some $ N \geq 0 $, and in this case the only admissible gathered block is $ \gamma_{N}^2 $. Explicitly, there exists $ \lambda_N \in \mathbb{F}_2 $ such that
\[
\tilde{Q}_2^*(p^*(c_n)) = \left\{ \begin{array}{ll}
\lambda_N \gamma_N^2 & \mbox{if } n = 2^{N+1} \\
0 & \mbox{otherwise}
\end{array} \right. .
\]

In the case $ n = 2^{N+1} $, in order to prove that $ \lambda_N = 0 $, we observe that, by applying Wu's formula and dualizing the Nishida relations,
\begin{align*}
\Sq^{2^N} \circ \tilde{Q}_2^*(p^*(c_{2^{N+1}})) &= \tilde{Q}_2^* \circ \Sq^{2^{N+1}}(p^*(c_{2^{N+1}})) \\
&= \tilde{Q}_2^* \left( \sum_{k=0}^N p^*(c_{2^{N+1} + 2^k}) \cdot p^*(c_{2^N-2^k}) \right) \\
&= \sum_{k=0}^N \tilde{Q}_2^*(p^*(c_{2^{N+1}+2^k})) \cdot \tilde{Q}_0^*(p^*(c_{2^N-2^k})) = 0
\end{align*}
because, since $ 2^{N+1} + 2^k $ is never a power of $ 2 $, $ \tilde{Q}_2^*(p^*(c_{2^{N+1}+2^k})) = 0 $.

In contrast, by the Steenrod algebra calculations in the cohomology of the symmetric groups \cite{Sinha:12},
\[
\Sq^{2^N}(\gamma_N^2) = (\Sq^{2^{N-1}}(\gamma_N))^2 = \gamma_N^2 \cdot (\gamma_1^{[2^{N-1}]})^2,
\]
which is not $ 0 $. This forces $ \lambda_N = 0 $.
\end{proof}

The stabilization of complete unordered flag manifolds $ \overline{\Fl}_\infty(\mathbb{C}) $ is the fiber of the $ \infty $-loop space map $ p \colon Q(BU(1)) \to BU $. The inclusion $ \overline{\Fl}_\infty(\mathbb{C}) \rightarrow Q_0(BU(1)) $ is an $ E_3 $-map. Thus, it preserves the homology operations $ Q_i $ and, consequently, their duals $ Q_i^* $.
From \cite{Guerra-Santanil}, $ H^*(\overline{\Fl}_\infty(\mathbb{C}); \mathbb{F}_2) $ is the quotient of $ H^*(Q_0(BU(1)); \mathbb{F}_2) $ by the ideal $ I $ generated by $ p^*(c_1), \dots, p^*(c_n), \dots $, the pullbacks under $ p $ of the mod $ 2 $ reduction of the universal Chern classes $ c_n \in H^{2n}(BU; \mathbb{F}_2) $. Therefore, the proposition above implies that, algebraically, the (dual) Dyer--Lashof operations on $ Q_0(BU(1)) $ fully determine the (dual) Dyer--Lashof operations on $ \overline{\Fl}_\infty(\mathbb{C}) $.

%\clearpage
	
\bibliographystyle{plain}
\bibliography{bibliografia}

\begin{thebibliography}{10}

\bibitem{Atiyah}
Michael Atiyah.
\newblock The geometry of classical particles.
\newblock In {\em Surveys in differential geometry}, volume~7 of {\em Surv.
  Differ. Geom.}, pages 1--15. Int. Press, Somerville, MA, 2000.

\bibitem{Atiyah-Bielawski}
Michael Atiyah and Roger Bielawski.
\newblock Nahm's equations, configuration spaces and flag manifolds.
\newblock {\em Bull. Braz. Math. Soc. (N.S.)}, 33(2):157--176, 2002.

\bibitem{Atiyah-Sutcliffe}
Michael Atiyah and Paul Sutcliffe.
\newblock The geometry of point particles.
\newblock {\em R. Soc. Lond. Proc. Ser. A Math. Phys. Eng. Sci.},
  458(2021):1089--1115, 2002.

\bibitem{Berry-Robbins}
M.~V. Berry and J.~M. Robbins.
\newblock Indistinguishability for quantum particles: spin, statistics and the
  geometric phase.
\newblock {\em Proc. Roy. Soc. London Ser. A}, 453(1963):1771--1790, 1997.

\bibitem{Boardman-Vogt}
J.~M. Boardman and R.~M. Vogt.
\newblock {\em Homotopy invariant algebraic structures on topological spaces},
  volume Vol. 347 of {\em Lecture Notes in Mathematics}.
\newblock Springer-Verlag, Berlin-New York, 1973.

\bibitem{Dokovic1}
Dragomir \v~Z. Dokovi\'c.
\newblock Proof of {A}tiyah's conjecture for two special types of
  configurations.
\newblock {\em Electron. J. Linear Algebra}, 9:132--137, 2002.

\bibitem{Dokovic2}
Dragomir \v~Z. Dokovi\'c.
\newblock Verification of {A}tiyah's conjecture for some nonplanar
  configurations with dihedral symmetry.
\newblock {\em Publ. Inst. Math. (Beograd) (N.S.)}, 72(86):23--28, 2002.

\bibitem{FMS}
Greg Friedman, Anibal Medina, and Dev Sinha.
\newblock Foundations of geometric cohomology: from co-orientations to product
  structures.
\newblock available on arXiv.org, 2022.

\bibitem{Fulton-MacPherson}
William Fulton and Robert MacPherson.
\newblock A compactification of configuration spaces.
\newblock {\em Ann. of Math. (2)}, 139(1):183--225, 1994.

\bibitem{GKR}
Soren Galatius, Alexander Kupers, and Oscar Randal-Williams.
\newblock Cellular $e_k$-algebras, 2023.
\newblock available on arXiv.org.

\bibitem{Sinha:12}
Chad Giusti, Paolo Salvatore, and Dev Sinha.
\newblock The mod-2 cohomology rings of symmetric groups.
\newblock {\em J. Topol.}, 5(1):169--198, 2012.

\bibitem{GSS}
L.~Guerra, P.~Salvatore, and D.~Sinha.
\newblock Cohomology rings of extended powers and of free infinite loop spaces.
\newblock {\em Transactions of the American Mathematical Society},
  377(12):8515--8561, 2024.

\bibitem{Guerra-Santanil}
Lorenzo Guerra and Santanil Jana.
\newblock Cohomology of complete unordered flag manifolds.
\newblock {\em Transactions of the American Mathematical Society}, (to appear),
  2024.

\bibitem{Kallel-Salvatore}
Sadok Kallel and Paolo Salvatore.
\newblock Symmetric products of two dimensional complexes.
\newblock In {\em Recent developments in algebraic topology}, volume 407 of
  {\em Contemp. Math.}, pages 147--161. Amer. Math. Soc., Providence, RI, 2006.

\bibitem{Lambrechts-Volic}
Pascal Lambrechts and Ismar Voli\'{c}.
\newblock Formality of the little {$N$}-disks operad.
\newblock {\em Mem. Amer. Math. Soc.}, 230(1079):viii+116, 2014.

\bibitem{Loday-Vallette}
Jean-Louis Loday and Bruno Vallette.
\newblock {\em Algebraic operads}, volume 346 of {\em Grundlehren der
  Mathematischen Wissenschaften [Fundamental Principles of Mathematical
  Sciences]}.
\newblock Springer, Heidelberg, 2012.

\bibitem{Malkoun}
Joseph Malkoun.
\newblock A new proof of {A}tiyah's conjecture on configurations of four
  points.
\newblock {\em J. Math. Phys.}, 64(5):Paper No. 053507, 11, 2023.

\bibitem{Salvatore:01}
Paolo Salvatore.
\newblock Configuration spaces with summable labels.
\newblock In {\em Cohomological methods in homotopy theory ({B}ellaterra,
  1998)}, volume 196 of {\em Progr. Math.}, pages 375--395. Birkh\"auser,
  Basel, 2001.

\bibitem{Sinha:04}
Dev~P. Sinha.
\newblock Manifold-theoretic compactifications of configuration spaces.
\newblock {\em Selecta Math. (N.S.)}, 10(3):391--428, 2004.

\end{thebibliography}
	
\end{document}